\begin{document}
\baselineskip 18pt
\hfuzz=6pt

\newtheorem{theorem}{Theorem}[section]
\newtheorem{prop}[theorem]{Proposition}
\newtheorem{lemma}[theorem]{Lemma}
\newtheorem{definition}[theorem]{Definition}
\newtheorem{cor}[theorem]{Corollary}
\newtheorem{example}[theorem]{Example}
\newtheorem{remark}[theorem]{Remark}
\newcommand{\ra}{\rightarrow}
\renewcommand{\theequation}
{\thesection.\arabic{equation}}
\newcommand{\ccc}{{\mathcal C}}
\newcommand{\one}{1\hspace{-4.5pt}1}

\newcommand{\wh}{\widehat}
\newcommand{\f}{\frac}
\newcommand{\df}{\dfrac}
\newcommand{\sgn}{\textup{sgn\,}}
 \newcommand{\rn}{\mathbb R^n}
  \newcommand{\si}{\sigma}
  \newcommand{\ga}{\gamma}
   \newcommand{\nf}{\infty}
\newcommand{\p}{\partial}
\newcommand{\De}{\Delta}

\newcommand{\norm}[1]{\left\|{#1}\right\|}%
\newcommand{\supp}{\operatorname{supp}}

\newcommand{\tf}{\tfrac}
\newcommand{\qq}{\quad\quad}
\newcommand{\lab}{\label}
\newcommand{\zzz}{\mathbf Z}
\newcommand{\li}{L^{\infty}}
\newcommand{\intrn}{\int_{\rn}}
\newcommand{\qqq}{\quad\quad\quad}


\newcommand{\vp}{\varphi}
\newcommand{\al}{\alpha}
\newcommand{\R}{\RR}
\newcommand{\intr}{\int_{\R}}
\newcommand{\intrr}{\int_{\R^2}}
\newcommand{\de}{\delta}
\newcommand{\om}{\omega}
\newcommand{\Tht}{\Theta}
\newcommand{\tht}{\theta}
\newcounter{question}
\newcommand{\qt}{%
        \stepcounter{question}%
        \thequestion}
\newcommand{\bq}{\fbox{Q\qt}\ }
\renewcommand{\wp}{\Psi}
\newcommand{\sgg}{\si_{gg}}
\newcommand{\sbtj}{\si_{b2j}}
\newcommand{\sbrk}{\si_{b3k}}
\newcommand{\sbzg}{\si_{b_0g}}

\newcommand{\abs}[1]{\left\vert #1\right\vert}%

\newcommand{\be}{\beta}%

\def\RR{\mathbb R}
\def\bbr{\mathbb R}
\def\N{\mathbb N}
\def\Rn{\mathbb R^n}
\def\Z{\mathbb Z}
\def\ve{\varepsilon}

\title[A sharp version of the H\"ormander Multiplier Theorem]
{A sharp version of the H\"ormander Multiplier Theorem}

\thanks{  }

\author{Loukas Grafakos}

\address{Department of Mathematics, University of Missouri, Columbia MO 65211, USA}
\email{grafakosl@missouri.edu}

\author{Lenka Slav\'ikov\'a}

\address{Department of Mathematics, University of Missouri, Columbia MO 65211, USA}
\email{slavikoval@missouri.edu}

\thanks{{\it Mathematics Subject Classification:} Primary   42B15. Secondary 42B25}
\thanks{The first   author   acknowledges the  support of the Simons Foundation and of the University  of Missouri Research Board.}

\begin{abstract}
 We provide an improvement of the H\"ormander multiplier theorem in which the Sobolev space $L^r_s(\mathbb R^n)$ with 
   integrability  index $r$ and smoothness index $s>n/r$ is replaced by the Sobolev space with smoothness $s$ built upon the Lorentz space $L^{n/s,1}(\mathbb R^n)$.
   \end{abstract}

\maketitle


\bigskip


\section{Introduction}

Given a bounded function $\si$ on $\rn$, we define a   linear operator
$$
T_\si(f)(x) = \int_{\rn} \wh{f}(\xi) \si(\xi) e^{2\pi i x\cdot \xi}d\xi
$$
acting on Schwartz functions $f$  on $\rn$; here 
$\wh{f}(\xi) = \int_{\rn} f(x)   e^{-2\pi i x\cdot \xi}dx$ is the Fourier transform of $f$.  
An old problem  in harmonic analysis is to find optimal  sufficient  conditions on $\si$ to be 
 an $L^p$ Fourier multiplier, i.e.,  for   the 
operator $T_\si$ to admit  a bounded extension from $L^p(\rn)$ to itself for a given $p\in (1,\nf)$.

Mikhlin's~\cite{Mikhlin} classical multiplier theorem states that if the condition
\begin{equation}\label{10}
|\partial^\alpha \si(\xi)|\leq C_\alpha |\xi|^{-| \alpha|}, \qquad \xi\neq 0, 
\end{equation}
holds for all { multi-indices} $\al$ with size $|\al | \le [n/2]+1$, then $T_\si$ admits a bounded extension from
$L^p(\rn)$ to itself for all $1<p<\nf$. This theorem is well suited for dealing with multipliers whose derivatives have a singularity at one point, such as  functions which are homogeneous of degree zero and indefinitely differentiable on the unit sphere. 

An extension of the Mikhlin theorem was obtained by H\"ormander~\cite{Hoe}. It asserts the following: for $s>0$ let $(I-\De)^{s/2} $ denote the operator given on the Fourier transform by multiplication by
$(1+4\pi^2 |\xi|^2)^{s/2}$ and    let $\Psi$ be a Schwartz function whose Fourier transform is supported in the annulus  
$\{\xi: 1/2< |\xi|< 2\}$ and which satisfies $\sum_{j\in \mathbb Z} \wh{\Psi}(2^{-j}\xi)=1$ for all $\xi\neq 0$. If for some $1\le r\le 2$ and $s>n/r$,  $\si$ satisfies
\begin{equation}\label{2}
\sup_{k\in \mathbb Z} \big\|(I-\De)^{s/2} \big[ \wh{\Psi}\si (2^k \cdot)\big] \big\|_{L^r(\R^n) }<\infty,
\end{equation}
then $T_\si$ admits a bounded extension from
$L^p(\rn)$ to itself for all $1<p<\nf$. 

It is natural to ask whether condition~\eqref{2} can still guarantee that $\sigma$ is an $L^p$ Fourier multiplier for some $p\in (1,\infty)$ if $s\leq \frac{n}{2}$.  
Via an interpolation argument, 
Calder\'on and Torchinsky~\cite[Theorem 4.6]{CT} showed  that  $T_\si$ is bounded from $L^p(\bbr^n)$ to itself whenever condition~\eqref{2} holds with $p$ satisfying
$\big| \f 1p -\f 12 \big| <\f sn$ and $\big| \f 1p -\f 12 \big| = \f 1r $. It was observed in \cite{GraHeHonNg1} that the   assumption $\big| \f 1p -\f 12 \big| = \f 1r $ can be replaced by a weaker one, namely, by $\frac{1}{r}<\frac{s}{n}$.   Moreover, it is known that if  
$T_\si$ is bounded from $L^p(\bbr^n)$ to itself for every $\sigma$ satisfying~\eqref{2}, then $\big| \f 1p -\f 12 \big| \le \f sn$, see Hirschman~\cite{hirschman2}, Wainger~\cite{W}, Miyachi~\cite{Miy}, Miyachi and Tomita~\cite{MT}, Grafakos, He, Honz\'\i k, and Nguyen~\cite{GraHeHonNg1}.
In other words, when   $rs>n$, then the condition $\big| \f 1p -\f 12 \big| <\f sn$ is  essentially  optimal for assumption \eqref{2}. 
Observe also that the  condition $rs>n$ is dictated by the  embedding of $L^r_s(\R^n) \hookrightarrow L^\infty(\R^n)$.  
It is still unknown to us if $L^p$ boundedness holds  on the line $\big| \f 1p -\f 12 \big| =\f sn$ although other positive  results on this line for $1<p<2$ and  on $H^1$ can be found in Seeger~\cite{Seeger1}, ~\cite{Seeger2}.   

Unlike the Mikhlin multiplier theorem, the H\"ormander and Calder\'on-Torchinsky theorems can treat multipliers whose derivatives have infinitely many singularities, such as the multiplier
\begin{equation}\label{E:power_type}
\sigma(x)=\sum_{k\in \Z} \phi(2^{-k}x) |2^{-k}x-a_k|^\beta,
\end{equation}
where $\beta<0$, $\phi$ is a smooth function supported in the set $\{x\in \R^n: \frac{1}{2}<|x|<2\}$ and, for every $k\in \mathbb N$, $a_k\in \R^n$ belongs to
 the same set.  

In this paper, we improve the result of~\cite[Theorem 4.6]{CT} by replacing the Lebesgue space $L^r(\R^n)$, $r>\frac{n}{s}$, in condition~\eqref{2} by the  locally larger  Lorentz space $L^{\frac{n}{s},1}(\R^n)$, defined in terms of the norm
$$
\|f\|_{L^{\frac{n}{s},1}(\R^n)}=\int_0^\infty f^*(r)r^{\frac{s}{n}-1}\,dr. 
$$
Here, $f^*$ stands for the nonincreasing rearrangement of the function $f$, namely, for the unique nonincreasing left-continuous function on $(0,\infty)$ equimeasurable with $f$, given by the explicit expression
$$
f^*(t)=  \inf \big\{r\ge 0:\,\, |\{y\in \mathbb R^n:\,\, |f(y)|>r\}| < t \big\}\, .
$$
We point out that the Lorentz space $L^{\frac{n}{s},1}(\R^n)$ appears naturally in this context, since it is known to be, at least for integer values of $s$, locally the largest rearrangement-invariant function space such that membership of $(I-\Delta)^\frac{s}{2}f$   to this space forces 
  $f$   to be bounded, see~\cite{S, CP}.

\begin{theorem}\label{T:main_theorem} 
Let $\Psi$ be a Schwartz function on $\mathbb R^n$ whose Fourier transform is supported in the annulus $1/2<|\xi|<2$ 
and satisfies $\sum_{j\in \mathbb Z} \wh{\Psi} ( 2^{-j} \xi) =1$, $\xi\neq 0$.
Let $p\in (1,\infty)$, $n\in \N$, and let $s\in (0,n)$ satisfy
$$
\left|\frac{1}{p}-\frac{1}{2}\right|<\frac{s}{n}.
$$
Then for all functions $f$ in the Schwartz class of $\mathbb R^n$ we have the a priori estimate 
\begin{equation}\label{E:inequality}
\|T_\sigma f\|_{L^p(\Rn)} \leq C \sup_{j\in \Z} \big\|(I-\Delta)^{\frac{s}{2}}[\wh\Psi \si(2^j\cdot)\big]\big\|_{L^{\frac{n}{s},1}(\R^n)} \|f\|_{L^p(\Rn)}.
\end{equation}
\end{theorem}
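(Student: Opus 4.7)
My plan is to mimic the proof of the Calder\'on--Torchinsky theorem \cite[Thm.~4.6]{CT}, systematically upgrading each $L^r$ Sobolev bound appearing there to a Lorentz-space bound and invoking at the crucial step the sharp Sobolev--Lorentz embedding
\[
\|f\|_{L^\infty(\Rn)}\leq C\,\|(I-\De)^{s/2}f\|_{L^{n/s,1}(\Rn)},
\]
alluded to in the introduction via \cite{S,CP}. Throughout I would set $\sigma_j(\xi)=\sigma(\xi)\wh\Psi(2^{-j}\xi)$, $m_j(\eta)=\sigma(2^j\eta)\wh\Psi(\eta)$, and $g_j:=(I-\De)^{s/2}m_j$, so that the hypothesis becomes $A:=\sup_j\|g_j\|_{L^{n/s,1}(\Rn)}<\infty$; $K_j$ will denote the convolution kernel of $T_{\sigma_j}$.

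The $L^2$ endpoint is essentially immediate. The Sobolev--Lorentz embedding gives $\|m_j\|_{L^\infty}\leq CA$ uniformly in $j$, hence, by bounded overlap of the supports of the $\sigma_j$, also $\|\sigma\|_{L^\infty}\leq CA$, and Plancherel yields $\|T_\sigma f\|_{L^2}\leq CA\,\|f\|_{L^2}$. This settles $p=2$ and will serve as one endpoint for interpolation.

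For the second endpoint I would derive off-diagonal decay of $K_j$. A short Fourier-inversion computation, using that $(I-\De)^{s/2}$ corresponds to multiplication by $(1+4\pi^2|\cdot|^2)^{s/2}$ on the Fourier side, yields
\[
K_j(x)=2^{jn}\bigl(1+4\pi^2\,2^{2j}|x|^2\bigr)^{-s/2}\check g_j(2^jx),
\]
so that weighted $L^1$-bounds of $K_j$ off the origin are controlled by the corresponding norms of $\check g_j$ against a Bessel-type weight. I would estimate such integrals by H\"older's inequality in Lorentz spaces, transferring the weight to the frequency side via Plancherel and using the natural pairing $L^{n/s,1}\times L^{n/s,\infty}\to L^1$ together with the fact that $g_j$ is essentially supported in a set of bounded measure. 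Inserting these off-diagonal bounds and the $L^2$ estimate into a vector-valued Littlewood--Paley square-function argument exactly as in \cite[Thm.~4.6]{CT} (or the refinement in \cite{GraHeHonNg1}) would yield the desired $L^p$ inequality in the full open range $|1/p-1/2|<s/n$; the case $p>2$ then follows from the case $p<2$ by duality, the hypothesis being invariant under $\sigma\mapsto\overline{\sigma}$.

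The main obstacle will be the kernel/off-diagonal step. In the Lebesgue setting one gets away with Plancherel plus Cauchy--Schwarz, but with the endpoint Lorentz norm $L^{n/s,1}$ one needs both the sharp Sobolev--Lorentz embedding, to handle the near-field/$L^2$ regime, and delicate Lorentz--H\"older inequalities to extract decay in $|x|$ from the $L^{n/s,\infty}$ behavior of the Bessel-type weight. Balancing the near- and far-field regimes uniformly in $j$ and in the scale of the dyadic spatial annulus is the technical heart of the argument, and is precisely where the choice of $L^{n/s,1}$, rather than the smaller $L^{n/s}$, becomes indispensable.
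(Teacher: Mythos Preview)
Your plan has a structural gap that the paper handles via a two-step argument. The crucial kernel/off-diagonal step you flag reduces, once you write $K_j(x)=2^{jn}\,\widehat{m_j}(2^jx)$ and absorb the weight $(1+|y|)^s$ into the kernel, to a Fourier-transform bound of the type $\|(1+|\cdot|)^s\,\widehat{m_j}\|_{L^{(n/s)',1}}\lesssim\|g_j\|_{L^{n/s,1}}$. This is a Hausdorff--Young/Plancherel-type mapping $L^{n/s,1}\to L^{(n/s)',1}$, and it is available only when $n/s\le 2$, i.e.\ when $s\ge n/2$. Your ``Plancherel'' step therefore cannot work for $s<n/2$, yet you claim to obtain the full open range $|1/p-1/2|<s/n$ in one stroke. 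Two further points: the pairing you invoke, $L^{n/s,1}\times L^{n/s,\infty}\to L^1$, is wrong (the H\"older dual of $L^{n/s,1}$ is $L^{(n/s)',\infty}$); and the assertion that $g_j$ is ``essentially supported in a set of bounded measure'' is unjustified, since $(I-\Delta)^{s/2}$ is nonlocal and $m_j$, not $g_j$, is the compactly supported object.

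The paper resolves this by first proving the estimate directly only for $s\in(n/2,n)$ --- via a sharp pointwise Lorentz-space bound $\bigl\|(\Delta_j^\Theta f)(x+2^{-j}\cdot)/(1+|\cdot|)^s\bigr\|_{L^{n/s,\infty}}\le C\,M_{L^q}(\Delta_j^\Theta f)(x)$ (Lemma~\ref{L:lemma}) combined with Lorentz H\"older and the Fefferman--Stein inequality; the Fourier bound above is legitimate in that range --- and then running a genuine \emph{complex interpolation of analytic families} between that $(p_1,s_1)$ estimate (with $s_1>n/2$) and the $L^2$ Plancherel bound. The analytic family $\sigma_z$ is built by simultaneously deforming the exponent of $(I-\Delta)$ and multiplying by a power of a measure-preserving rearrangement of $|g_j|$, so that the Lorentz index $n/s_\tau$ tracks the smoothness $s_\tau$ along the strip. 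Your proposal has no mechanism to reach $s\le n/2$; you would need either this interpolation step or some substitute that avoids Fourier $L^p\to L^{p'}$ mapping for $p>2$.
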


As an application of Theorem~\ref{T:main_theorem} we show that the function $\sigma$ from~\eqref{E:power_type} continues to be an $L^p$ Fourier multiplier for any $p\in (1,\infty)$ if $|2^{-k}x-a_k|$ is replaced by $\log \frac{e4^n}{|2^{-k}x-a_k|^n}$. In fact, we can even allow an arbitrary iteration of logarithms in this example. 

\begin{example}\label{E:example}
Assume that $n\in \N$, $n\geq 2$, and $\beta<0$. 
Let $\phi$ be a smooth function supported in the set $A = \{x\in \R^n: 1/2<|x|<2\}$ and let $a_k\in A$, $k\in \mathbb Z$. Then the function
\begin{equation}\label{E:logarithmic_type}
\sigma(x)=\sum_{k\in \Z} \phi(2^{-k}x) \left(\log \frac{e4^n}{|2^{-k}x-a_k|^n}\right)^\beta
\end{equation}
is an $L^p$ Fourier multiplier for any $p\in (1,\infty)$.
\end{example}

To verify the statement of Example~\ref{E:example}, we fix a positive integer $s$ and observe that for any $j\in \Z$, 
\begin{align*}
&\|(I-\Delta)^{\frac{s}{2}}[\wh{\Psi}\sigma(2^j\cdot)]\|_{L^{\frac{n}{s},1}(\R^n)}
\leq \left\|(I-\Delta)^{\frac{s}{2}}\left[\wh{\Psi}(x) \phi(x) \left(\log \frac{e4^n}{|x-a_j|^n}\right)^\beta\right]\right\|_{L^{\frac{n}{s},1}(\R^n)}\\
&\qquad\qquad\qquad\qquad+\left\|(I-\Delta)^{\frac{s}{2}}\left[\wh{\Psi}(x) \phi(2x) \left(\log \frac{e4^n}{|2x-a_{j-1}|^n}\right)^\beta\right]\right\|_{L^{\frac{n}{s},1}(\R^n)}\\
&\qquad\qquad\qquad\qquad+\left\|(I-\Delta)^{\frac{s}{2}}\left[\wh{\Psi}(x) \phi\big(\frac{x}{2}\big) \left(\log \frac{e4^n}{|\frac{x}{2}-a_{j+1}|^n}\right)^\beta\right]\right\|_{L^{\frac{n}{s},1}(\R^n)} .
\end{align*}
In what follows, let us deal with the first term only, since the latter two terms can be estimated in a similar way. 

Fix $j\in \Z$ and denote
$$
f_j(x)=\wh{\Psi}(x) \phi(x) \left(\log \frac{e4^n}{|x-a_j|^n}\right)^\beta.
$$
Also, for any multiindex $\alpha$ satisfying $|\alpha|\geq 1$, let $\frac{\partial^\alpha}{\partial x^\alpha} f_j$ stand for the weak derivative of $f_j$ with respect to $\alpha$.
We have
$$
\left|\frac{\partial^\alpha}{\partial x^\alpha} f_j(x)\right| \leq C \chi_A(x) \left(\log \frac{e4^n}{|x-a_j|^{n}}\right)^{\beta-1} |x-a_j|^{-|\alpha|}.
$$
Since $|A|\leq 2^n \omega_n$, where $\omega_n$ stands for the volume of the unit ball in $\R^n$, the previous estimate implies
$$
\left(\frac{\partial^\alpha}{\partial x^\alpha} f_j\right)^*(t)\leq C \chi_{(0,2^n \omega_n)}(t) \left(\log \frac{e4^n \omega_n}{t}\right)^{\beta-1} t^{-\frac{|\alpha|}{n}},
$$
where the constant $C$ is independent of $j$. 
Therefore, if $s$ is a positive integer and $\alpha$ is a multiindex with $1\leq |\alpha|\leq s$, then
$$
\left(\frac{\partial^\alpha}{\partial x^\alpha} f_j\right)^*(t)\leq C \chi_{(0,2^n \omega_n)}(t) \left(\log \frac{e4^n \omega_n}{t}\right)^{\beta-1} t^{-\frac{s}{n}}.
$$
Consequently,
\begin{equation}\label{E:sobolev}
\sup_{1\leq |\alpha|\leq s} \left\|\frac{\partial^\alpha}{\partial x^\alpha} f_j\right\|_{L^{\frac{n}{s},1}(\R^n)} 
\leq C \int_0^{2^n \omega_n} \left(\log \frac{e4^n \omega_n}{t}\right)^{\beta-1} t^{-1}\,dt<\infty. 
\end{equation}
Since each $|f_j|$ is bounded by a constant independent of $j$ and compactly supported in the set $A$, we also have
$$
\|f_j\|_{L^{\frac{n}{s}}(\R^n)}\leq C<\infty.
$$

It remains to observe that the quantity $\|(I-\Delta)^{\frac{s}{2}}f_j\|_{L^{\frac{n}{s},1}(\R^n)}$ 
is equivalent to 
$$
\sum_{|\alpha|\leq s} \left\|\frac{\partial^\alpha}{\partial x^\alpha} f_j\right\|_{L^{\frac{n}{s},1}(\R^n)}.
$$ 
This can be proved in exactly the same way as the corresponding result for the Lebesgue spaces, see, e.g., \cite[Theorem 3, Chapter 5]{St1}. Therefore, we deduce that 
$$
\sup_{j\in \Z} \|(I-\Delta)^{\frac{s}{2}}[\wh{\Psi}\sigma(2^j\cdot)]\|_{L^{\frac{n}{s},1}(\R^n)}<\infty
$$ 
for any positive integer $s$. Theorem~\ref{T:main_theorem} now yields that $\sigma$ is an $L^p$ Fourier multiplier for any $p\in (1,\infty)$. 



Finally, notice that we can in fact replace the logarithm in~\eqref{E:logarithmic_type} by any iteration of logarithms, namely, we can consider the more general symbol
\begin{equation*}
\sigma(x)=\sum_{k\in \Z} \phi(2^{-k}x) \left(\underbrace{\log \cdots \log}_{\ell-\textup{times}} \frac{4^n\underbrace{e^{.{^{.{^{.{e}}}}}}}_{\ell- \textup{times}}}{|2^{-k}x-a_k|^n}\right)^\beta,
\end{equation*}
where $\ell$ is any positive integer. 
A computation similar to the one we performed above shows that $\sigma$ is an $L^p$ Fourier multiplier for any $p\in (1,\infty)$ as well.

\section{The main estimate}

In this section we show that   inequality~\eqref{E:inequality} holds for any $p\in (1,\infty)$ provided that $s\in (n/2,n)$, see Theorem~\ref{T:endpoint} below. This estimate will serve as one endpoint in the interpolation argument leading to the proof of Theorem~\ref{T:main_theorem}. The interpolation is the content of the next section. 

Let us start by recalling the definitions of two types of Lorentz spaces that will be used in the sequel. Suppose that $1<p<\infty$. Then, for any measurable function $f$ on $\R^n$, we define
$$
\|f\|_{L^{p,1}(\R^n)}=\int_0^\infty f^*(t) t^{\frac{1}{p}-1}\,dt
$$
and
$$
\|f\|_{L^{p,\infty}(\R^n)}=\sup_{t>0} f^*(t) t^{\frac{1}{p}}.
$$
It can be shown that
$$
\|f\|_{L^{p,1}(\R^n)}=p\int_0^\infty |\{x\in \R^n: |f(x)|>\lambda\}|^{\frac{1}{p}}\,d\lambda
$$
and
$$
\|f\|_{L^{p,\infty}(\R^n)}=\sup_{\lambda>0} \lambda |\{x\in \R^n: |f(x)|>\lambda\}|^{\frac{1}{p}}.
$$
The space $L^{p',\infty}(\R^n)$, where $p'=\frac{p}{p-1}$, is a kind of a measure theoretic dual of the space $L^{p,1}(\R^n)$, in the sense that the following form of H\"older's inequality
$$
\int_{\R^n} |fg| \leq \|f\|_{L^{p,1}(\R^n)} \|g\|_{L^{p',\infty}(\R^n)}
$$
holds. 

In what follows, $B(x,r)$   denotes the ball centered at point $x$ and having the radius $r$. If a ball of radius $r$ is centered at the origin, we shall denote it simply by $B_r$.
Let $q\geq 1$ be a real number. We   consider the centered maximal operator $M_{L^q}$ defined by
$$
M_{L^q} f(x) = \sup_{r>0} \left(\frac{1}{|B(x,r)|} \int_{B(x,r)} |f(y)|^q\,dy\right)^{\frac{1}{q}}.
$$
Observe that
$$
M_{L^q} f=(M |f|^q)^{\frac{1}{q}},
$$
where $M$ stands for the classical Hardy-Littlewood maximal operator. 

The crucial step towards proving Theorem~\ref{T:endpoint} is the following lemma, which can be understood as a sharp variant of ~\cite[Theorem 2.1.10]{CFA}.  

\begin{lemma}\label{L:lemma}
Assume that $n\in \N$, $s \in (0,n)$ and $q>\frac{n}{s}$. Then there is a positive constant $C$ depending on $n$, $s$ and $q$ such that for any $j\in \Z$ and any measurable function $f$ on $\Rn$,
\begin{equation}\label{E:estimate}
\left\|\frac{f(x+2^{-j}y)}{(1+|y|)^{s}}\right\|_{L^{\frac{n}{s},\infty}(\Rn)} \leq C M_{L^q} f(x), \quad x\in \Rn.
\end{equation}
\end{lemma}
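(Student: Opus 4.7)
The plan is to work directly with the distribution function of $g(y) := f(x+2^{-j}y)/(1+|y|)^s$, since the weak-$L^{n/s}$ quasinorm equals $\sup_{\lambda>0} \lambda\, |\{|g|>\lambda\}|^{s/n}$. The natural device is a dyadic decomposition in $|y|$: set $E_0 = \{|y|\le 1\}$ and $E_k = \{2^{k-1} < |y| \le 2^k\}$ for $k\ge 1$. On $E_k$ the weight $(1+|y|)^{-s}$ is comparable to $2^{-ks}$, and the image $\{x+2^{-j}y : y \in E_k\}$ lies in the ball $B(x,2^{k-j})$; this last observation is what lets the $L^q$ maximal function $M := M_{L^q}f(x)$ enter.

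For each $k$ I would estimate the measure of $\{y \in E_k : |f(x+2^{-j}y)| > c\lambda 2^{ks}\}$ in two complementary ways. The trivial bound gives $|E_k| \le C\, 2^{kn}$. Chebyshev, combined with the change of variables $z = 2^{-j}y$ and the definition of $M_{L^q}$, yields
$$\int_{E_k} |f(x+2^{-j}y)|^q\,dy \;=\; 2^{jn}\!\int_{B(x,2^{k-j})}\!\!|f|^q \;\le\; C\, 2^{kn} M^q,$$
hence $|\{y\in E_k : |f(x+2^{-j}y)| > c\lambda 2^{ks}\}| \le C\, 2^{kn} M^q / (\lambda 2^{ks})^q$. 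Thus each annulus contributes at most $C\, 2^{kn}\min\bigl(1,\,(M/(\lambda 2^{ks}))^q\bigr)$.

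To finish, choose the cutoff $k_0 \in \Z$ determined by $2^{k_0 s} \sim M/\lambda$. For $k \le k_0$ I use the trivial bound; the resulting geometric sum in $2^{kn}$ is $O(2^{k_0 n}) = O((M/\lambda)^{n/s})$. For $k > k_0$ I use Chebyshev; the sum becomes $C(M/\lambda)^q \sum_{k>k_0} 2^{k(n-sq)}$, and the hypothesis $q>n/s$, i.e.\ $n-sq<0$, both forces convergence and pins the dominant term at $2^{k_0(n-sq)}$, which multiplies $(M/\lambda)^q$ to give again $(M/\lambda)^{n/s}$. The corner case $M<\lambda$ only requires the Chebyshev branch, and there $(M/\lambda)^q \le (M/\lambda)^{n/s}$ automatically because $q>n/s$. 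Collecting the pieces yields $|\{|g|>\lambda\}| \le C(M/\lambda)^{n/s}$, which is exactly~\eqref{E:estimate}.

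The only delicate step is the matching at the cutoff: the trivial bound and the Chebyshev bound must combine to the same order $(M/\lambda)^{n/s}$, and this balancing is possible precisely because $q>n/s$ produces $n-sq<0$, simultaneously supplying convergence of the tail series and the correct exponent on $(M/\lambda)$. Everything else is routine dyadic bookkeeping, essentially the same scheme as in \cite[Theorem 2.1.10]{CFA} but sharpened by replacing strong-type with weak-type $L^{n/s}$ on the left-hand side.
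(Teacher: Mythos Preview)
Your proof is correct and follows essentially the same approach as the paper: a dyadic decomposition in $|y|$, the combination of the trivial volume bound and the Chebyshev/$M_{L^q}$ bound on each annulus, and a split of the sum at the crossover scale, with the hypothesis $q>n/s$ (i.e.\ $n-sq<0$) providing both convergence of the tail and the matching exponent. The only cosmetic differences are that the paper first reduces to $j=0$, $x=0$ and normalizes $M_{L^q}g(0)=1$, and handles the unit ball via the embedding $L^q(B_1)\hookrightarrow L^{n/s,\infty}(B_1)$ rather than absorbing it into the dyadic scheme.
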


\begin{proof}
We may assume, without loss of generality, that $j=0$ and $x=0$. Indeed, setting $g(y)=f(x+2^{-j}y)$, we obtain 
\begin{equation}\label{E:reduction1}
\left\|\frac{f(x+2^{-j}y)}{(1+|y|)^{s}}\right\|_{L^{\frac{n}{s},\infty}(\Rn)} = \left\|\frac{g(y)}{(1+|y|)^{s}}\right\|_{L^{\frac{n}{s},\infty}(\Rn)}
\end{equation}
and
\begin{align}\label{E:reduction2}
M_{L^q} f(x)
&=\sup_{r>0} \left(\frac{1}{|B(x,r)|} \int_{B(x,r)} |f(y)|^q\,dy\right)^{\frac{1}{q}}\\
\nonumber
&=\sup_{r>0} \left(\frac{1}{2^{jn}|B(x,r)|} \int_{B(0,2^j r)} |f(x+2^{-j}z)|^q\,dz\right)^{\frac{1}{q}}\\
\nonumber
&=\sup_{r'>0} \left(\frac{1}{|B(0,r')|} \int_{B(0,r')} |g(y)|^q\,dy\right)^{\frac{1}{q}}\\
\nonumber
&=M_{L^q} g(0). 
\end{align}
Hence, it suffices to show that for any measurable function $g$ on $\Rn$,
\begin{equation}\label{E:reduced_inequality}
\left\|\frac{g(y)}{(1+|y|)^{s}}\right\|_{L^{\frac{n}{s},\infty}(\Rn)} \leq C M_{L^q} g(0).
\end{equation}

If $M_{L^q} g(0)=\infty$, then inequality~\eqref{E:reduced_inequality} holds trivially, so we can assume in what follows that $M_{L^q} g(0)<\infty$. Since the case $M_{L^q} g(0)=0$ is trivial as well (as $g$ needs to vanish a.e.\ in this case), dividing the function $g$ by the positive constant $M_{L^q} g(0)$, we can in fact assume     that $M_{L^q} g(0)=1$.

Fix any $a>0$ and $k\in \N_0$. Then
\begin{align*}
|\{y\in B_{2^{k+1}}\setminus B_{2^{k}}: |g(y)|>a\}|
&\leq \frac{1}{a^q}\int_{B_{2^{k+1}}\setminus B_{2^{k}}} |g(y)|^q\,dy\\
&\leq \frac{|B_{2^{k+1}}| }{a^q}\cdot \frac{1}{|B_{2^{k+1}}|} \int_{B_{2^{k+1}}} |g(y)|^q\,dy
\leq \frac{\omega_n 2^{(k+1)n}}{a^q},
\end{align*}
where $\omega_n$ denotes the volume of the unit ball in $\Rn$. Combining this with the trivial estimate
$$
|\{y\in B_{2^{k+1}}\setminus B_{2^{k}}: |g(y)|>a\}| \leq \omega_n 2^{(k+1)n},
$$
we deduce that
\begin{align*}
&\left|\left\{y\in \Rn: \frac{|g(y)|}{(1+|y|)^s}>a\right\}\right|\\
&=\left|\left\{y\in B_1: \frac{|g(y)|}{(1+|y|)^s}>a\right\}\right| +\sum_{k=0}^\infty \left|\left\{y\in B_{2^{k+1}}\setminus B_{2^{k}}: \frac{|g(y)|}{(1+|y|)^s}>a\right\}\right|\\
&\leq \left|\left\{y\in B_1: |g(y)|>a\right\}\right| + \sum_{k=0}^\infty \left|\left\{y\in B_{2^{k+1}}\setminus B_{2^k} : |g(y)|> 2^{ks} a\right\}\right|\\
&\leq \left|\left\{y\in B_1: |g(y)|>a\right\}\right| +  \sum_{k=0}^\infty \omega_n 2^{(k+1)n} \min\left\{\frac{1}{2^{ksq} a^q}, 1 \right\}\\
&\leq \left|\left\{y\in B_1: |g(y)|>a\right\}\right| + \sum_{k\in \N_0: 2^k<\frac{1}{a^{ {1}/{s}}}} \omega_n 2^n \cdot 2^{kn} + \sum_{k\in \N_0: 2^k \geq \frac{1}{a^{ {1}/{s}}}} \frac{\omega_n 2^n}{a^q} \cdot 2^{k(n-sq)}\\
&\leq \left|\left\{y\in B_1: |g(y)|>a\right\}\right| +  \frac{C}{a^{\frac{n}{s}}}.
\end{align*}
Notice that in the last inequality we have used the fact that $n-sq<0$. Hence,
\begin{align*}
\left\|\frac{g(y)}{(1+|y|)^{s}}\right\|_{L^{\frac{n}{s},\infty}(\Rn)}
&=\sup_{a>0} a \left|\left\{y\in \Rn: \frac{|g(y)|}{(1+|y|)^s}>a\right\}\right|^{\frac{s}{n}}\\
&\leq \sup_{a>0} a \left|\left\{y\in B_1: |g(y)|>a\right\}\right|^{\frac{s}{n}}
+C\\
&=\|g\|_{L^{\frac{n}{s},\infty}(B_1)} +C\\
&\leq C' \|g\|_{L^q(B_1)}+C\\
&\leq C' \omega_n^{\frac{1}{q}} M_{L^q}g(0) +C
\leq C^{''},
\end{align*}
where $C'>0$ is the constant from the embedding $L^q(B_1) \hookrightarrow L^{\frac{n}{s},\infty}(B_1)$. 
Since $M_{L^q}g(0)=1$, this proves~\eqref{E:reduced_inequality}, and in turn~\eqref{E:estimate} as well.
\end{proof}

\begin{theorem}\label{T:endpoint}
Let $p\in (1,\infty)$, $n\in \N$, $s\in (\frac{n}{2},n)$. Let $\Psi$ be as in Theorem~\ref{T:main_theorem}. Then
\begin{equation}\label{E:hormander_lorentz}
\|T_\sigma f\|_{L^p(\Rn)} \leq C \sup_{j\in \Z} \big\|(I-\Delta)^{\frac{s}{2}}[\wh\Psi \si(2^j\cdot)\big]\big\|_{L^{\frac{n}{s},1}(\R^n)} \|f\|_{L^p(\Rn)}.
\end{equation}
\end{theorem}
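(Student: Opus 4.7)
The plan is to reduce the estimate to a pointwise bound for each Littlewood--Paley piece of $T_\sigma$, apply a square function argument in the range $p\in(n/s,\infty)$, and then extend to all of $(1,\infty)$ by duality. Writing $m_j(\xi)=\wh\Psi(\xi)\sigma(2^j\xi)$, we have $\sigma(\xi)=\sum_{j\in\Z} m_j(2^{-j}\xi)$, which decomposes $T_\sigma f=\sum_{j\in\Z} S_j f$ with
\[
S_j f(x) = \int_{\Rn} h_j(u)\, f(x-2^{-j}u)\,du, \qquad h_j := m_j^\vee.
\]
Since $m_j$ is supported in $\{1/2<|\xi|<2\}$, each $S_j f$ has Fourier support in $\{2^{j-1}\le|\xi|\le 2^{j+1}\}$, and hence $S_j f = S_j \tilde\Delta_j f$ for a standard Littlewood--Paley projection $\tilde\Delta_j$ equal to the identity on this annulus.

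The heart of the argument is the pointwise estimate
\[
|S_j f(x)|\le C A\, M_{L^q} f(x), \qquad q>\tfrac{n}{s}, \qquad A := \sup_{j\in\Z} \|(I-\De)^{s/2} m_j\|_{L^{n/s,1}(\Rn)}.
\]
To prove it I split the integrand as $[(1+|u|)^s h_j(u)]\cdot[f(x-2^{-j}u)/(1+|u|)^s]$ and apply H\"older's inequality in Lorentz spaces at the conjugate pair $(n/(n-s),\,n/s)$. The second factor is controlled by $M_{L^q}f(x)$ thanks to Lemma~\ref{L:lemma}. For the first factor, the identity $(1+4\pi^2|u|^2)^{s/2} h_j(u) = ((I-\De)^{s/2} m_j)^\vee(u)$ reduces matters to showing
\[
\|((I-\De)^{s/2} m_j)^\vee\|_{L^{n/(n-s),1}(\Rn)} \le C \|(I-\De)^{s/2} m_j\|_{L^{n/s,1}(\Rn)},
\]
which is a Lorentz-space Hausdorff--Young inequality obtained by real interpolation between $\mathcal F:L^1\to L^\infty$ and $\mathcal F:L^2\to L^2$; crucially, it requires $n/s\in(1,2)$, i.e.\ $s\in(n/2,n)$.

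With the pointwise bound in hand I finish for $p\in(n/s,\infty)$ via Littlewood--Paley and Fefferman--Stein: for $1<p<\infty$,
\[
\|T_\sigma f\|_{L^p(\Rn)} \le C \Big\|\Big(\sum_{j\in\Z} |S_j\tilde\Delta_j f|^2\Big)^{1/2}\Big\|_{L^p(\Rn)} \le CA\Big\|\Big(\sum_{j\in\Z} |M_{L^q}(\tilde\Delta_j f)|^2\Big)^{1/2}\Big\|_{L^p(\Rn)}.
\]
The vector-valued Fefferman--Stein maximal inequality bounds the last expression by $C\|(\sum_j |\tilde\Delta_j f|^2)^{1/2}\|_{L^p(\Rn)}$ provided $p>q$ and $q<2$, both of which can be arranged precisely when $p>n/s$ (again using $s>n/2$). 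A final application of the Littlewood--Paley inequality absorbs the square function into $\|f\|_{L^p(\Rn)}$.

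The remaining range $p\in(1,n/s]$ is covered by duality: the $L^2$-adjoint of $T_\sigma$ is $T_{\bar\sigma}$, and upon assuming (without loss of generality) that $\wh\Psi$ is real-valued the multiplier norm $A$ is invariant under complex conjugation, so the bound just proved transfers to $T_\sigma$ on $L^p$ whenever $p<n/(n-s)$. Since $s>n/2$ forces $n/(n-s)>n/s$, the intervals $(1,n/(n-s))$ and $(n/s,\infty)$ together cover $(1,\infty)$. The main obstacle is the pointwise estimate: the two matchings $(n/s,\,n/(n-s))$ for H\"older in Lorentz spaces and $n/s<2$ for Hausdorff--Young both hinge on $s\in(n/2,n)$, which is also exactly what lets Fefferman--Stein close the argument directly and explains why the interpolation carried out in the next section is needed to push $s$ down to $(0,n)$.
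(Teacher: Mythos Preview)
Your proof is correct and follows essentially the same route as the paper's: the pointwise bound $|\Delta_j T_\sigma f|\le CA\,M_{L^q}(\tilde\Delta_j f)$ via H\"older's inequality in Lorentz spaces combined with Lemma~\ref{L:lemma} and a Lorentz-space Hausdorff--Young inequality, followed by Littlewood--Paley theory, the Fefferman--Stein vector-valued maximal inequality, and duality. The only cosmetic difference is that you run the direct square-function argument on the range $p>n/s$ rather than $p\ge 2$; since $n/s<2$ both choices work and the duality step covers the remainder either way.
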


\begin{proof}
Let 
$$
K=\sup_{j\in \mathbb Z} \big\|    (I-\De)^{\f{s }{2}}   \big[\wh{\Psi} \si(2^j\cdot)\big]\big\|_{L^{\frac{n}{s},1}(\R^n)} <\infty\, . 
$$
Introduce the function $\Theta$ satisfying 
$$
\wh{\Theta}(\xi)=\wh{\Psi}(\xi/2)+\wh{\Psi}(\xi)+\wh{\Psi}(2\xi),
$$
and observe that $\wh{\Theta}$ is equal to $1$ on the support of the function $\wh{\Psi}$. 

Let us denote by $\De_j$ and $\De_j^\Theta$ the Littlewood-Paley operators associated with $\Psi$ and $\Theta$, respectively. 
If $f$ is a Schwartz function on $\R^n$, then standard manipulations yield
\begin{align*}
\De_j T_\si (f)(x) &= \int_{\R^n} \wh{f}(\xi) \wh{\Psi}(2^{-j} \xi) \si(\xi) e^{2\pi i x\cdot \xi}d\xi
= \int_{\R^n} (\De_j^{\Theta}f )\sphat{}\,(\xi) \wh{\Psi}(2^{-j} \xi)   \si(\xi) e^{2\pi i x\cdot \xi}d\xi\\
&=2^{jn}  \int_{\R^n} (\De_j^{\Theta}f )\sphat{}\,(2^{j} \xi') \wh{\Psi}( \xi')   \si(2^{j}\xi') e^{2\pi i  x\cdot 2^{j}\xi'} d\xi' \\
&=\int_{\R^n} (\De_j^{\Theta}f )( x+2^{-j}y )  \big[\wh{\Psi}   \si(2^{j}\cdot)\big]\sphat\,  (y) \, dy\\
&=\int_{\R^n} \f{  (\De_j^{\Theta}f )( x+2^{-j}y )}{(1+|y|)^s}  (1+|y|)^s\big[\wh{\Psi}   \si(2^{j}\cdot)\big]\sphat\,  ( y) \, dy.
\end{align*}
By the H\"older inequality in Lorentz spaces, we therefore obtain
$$
|\De_j T_\si (f)(x)|\leq \left\|\f{(\De_j^{\Theta}f )( x+2^{-j}y )}{(1+|y|)^s}\right\|_{L^{\frac{n}{s},\infty}(\Rn)} \left\|(1+|y|)^s\big[\wh{\Psi}   \si(2^{j}\cdot)\big]\sphat\,  ( y)\right\|_{L^{(\frac{n}{s})',1}(\Rn)}.
$$

Since $\frac{n}{s}<2$, we can find a real number $q$ such that $\frac{n}{s}<q<2$. Lemma~\ref{L:lemma} now yields that
$$
\left\|\f{(\De_j^{\Theta}f )( x+2^{-j}y )}{(1+|y|)^s}\right\|_{L^{\frac{n}{s},\infty}(\Rn)} \leq C M_{L^q}(\De_j^{\Theta} f)(x).
$$

Using boundedness properties of the Fourier transform, we deduce that
\begin{align*}
\left\|(1+|y|)^s\big[\wh{\Psi}   \si(2^{j}\cdot)\big]\sphat\,  ( y)\right\|_{L^{(\frac{n}{s})',1}(\Rn)}
&\leq C \left\|(1+|y|^2)^{\frac{s}{2}} \big[\wh{\Psi}   \si(2^{j}\cdot)\big]\sphat\,  ( y)\right\|_{L^{(\frac{n}{s})',1}(\Rn)}\\
&\leq C \big\|    (I-\De)^{\f{s }{2}}   \big[\wh{\Psi} \si(2^j\cdot)\big]\big\|_{L^{\frac{n}{s},1}(\R^n)}
\leq CK.
\end{align*}
Altogether, we obtain the estimate
$$
|\De_j T_\si (f)|(x) \leq CK M_{L^q}(\De_j^{\Theta} f)(x).
$$

Assume that $p\geq 2$.
Then we get, by applying the Littlewood-Paley theorem and the Fefferman-Stein inequality (notice that $\frac{p}{q}\geq \frac{2}{q}>1$),
\begin{align*}
\big\| T_\si(f) \big\|_{L^p(\R^n)} 
&\le C \Big\| \Big( \sum_{j \in \mathbb Z} |\De_j T_\si (f) |^2 \Big)^{\f12} \Big\|_{L^p(\Rn)}
\leq C K \Big\| \Big( \sum_{j \in \mathbb Z} |M_{L^q}(\De_j^{\Theta} f) |^2 \Big)^{\f12} \Big\|_{L^p(\Rn)}\\
&=C K \Big\|\Big( \sum_{j \in \mathbb Z} (M (|\De_j^{\Theta} f|^q)^{\frac{2}{q}}\Big)^{\frac{q}{2}} \Big\|_{L^{\frac{p}{q}}(\Rn)}^{\frac{1}{q}}
\leq C K \Big\|\Big( \sum_{j \in \mathbb Z} |\De_j^{\Theta} f|^{q \cdot \frac{2}{q}}\Big)^{\frac{q}{2}} \Big\|_{L^{\frac{p}{q}}(\Rn)}^{\frac{1}{q}}\\
&= C K \Big\|\Big( \sum_{j \in \mathbb Z} |\De_j^{\Theta} f|^{2}\Big)^{\frac{1}{2}} \Big\|_{L^{p}(\Rn)}
\leq C K \|f\|_{L^p(\Rn)}.
\end{align*}

If $p\in (1,2)$ then the result follows by duality.
\end{proof}

\section{Interpolation}

Our main goal in this section will be to prove the following theorem.

\begin{theorem}\label{T:interpolation}
Suppose that $1<p_1<\infty$ and $0<s_1 <n$. If 
\begin{equation}\label{E:assumption}
\|T_\si f\|_{L^{p_1}(\R^n)} \leq C \sup_{j\in \Z} \|(I-\De)^{\frac{s_1}{2}}[\wh{\Psi} \si(2^j\cdot)]\|_{L^{\frac{n}{s_1},1}(\R^n)} \|f\|_{L^{p_1}(\R^n)},
\end{equation}
then
$$
\|T_\si f\|_{L^{p}(\R^n)} \leq C \sup_{j\in \Z} \|(I-\De)^{\frac{s}{2}}[\wh{\Psi} \si(2^j\cdot)]\|_{L^{\frac{n}{s},1}(\R^n)} \|f\|_{L^p(\R^n)}
$$
for any $1<p<\infty$ and $0<s<s_1$ satisfying
\begin{equation}\label{E:assumption_ps}
\frac{1}{s}\left|\frac{1}{p} - \frac{1}{2}\right| <\frac{1}{s_1} \left|\frac{1}{p_1}-\frac{1}{2}\right|.
\end{equation}
\end{theorem}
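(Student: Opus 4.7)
The plan is to apply Stein's complex interpolation between the hypothesis at $(p_1, s_1)$ and the trivial Plancherel endpoint at $(2, 0)$, then pass from the resulting interpolation line to the strict region of \eqref{E:assumption_ps} via a localized Sobolev-Lorentz embedding, and finally handle $p < 2$ by duality. For $\theta \in (0,1)$, set $1/p_\theta = (1-\theta)/2 + \theta/p_1$ and $s_\theta = \theta s_1$; then the equality $|1/p_\theta - 1/2|/s_\theta = |1/p_1 - 1/2|/s_1$ holds along this line. The $(2,0)$-endpoint is supplied by Plancherel:
$$\|T_\sigma f\|_{L^2} \le \|\sigma\|_{L^\infty}\|f\|_{L^2} \le C\sup_{j\in\Z}\|\wh\Psi\sigma(2^j\cdot)\|_{L^\infty}\|f\|_{L^2},$$
the second inequality holding since finitely many translates of $\wh\Psi(2^{-j}\cdot)$ cover $\R^n\setminus\{0\}$.

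Next I would build an analytic family of symbols piecewise. Set $h_j = (I-\Delta)^{s_\theta/2}[\wh\Psi\sigma(2^j\cdot)] \in L^{n/s_\theta,1}$, and for $z$ in the strip $0 \le \Re z \le 1$ define the $j$-th Littlewood-Paley piece of the family by
$$[\wh\Psi\sigma^z(2^j\cdot)](\xi) := e^{\delta(z-\theta)^2}\,\wh\Theta(\xi)\,(I-\Delta)^{-zs_1/2}h_j^z(\xi),$$
where $\wh\Theta$ is a smooth cutoff equal to $1$ on $\mathrm{supp}\,\wh\Psi$, and $\{h_j^z\}$ is an analytic Lorentz-space family with $h_j^\theta = h_j$ and with both $\|h_j^{it}\|_{L^\infty}$ and $\|h_j^{1+it}\|_{L^{n/s_1,1}}$ controlled by $\|h_j\|_{L^{n/s_\theta,1}}$. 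The Gaussian factor absorbs the polynomial growth in $|t|$ coming from the imaginary powers $(I-\Delta)^{-its_1/2}$. Pairing this symbol family with the standard analytic $L^p$-families for $f$ and $g$ in the trilinear form $(\sigma, f, g)\mapsto \int T_\sigma(f)\bar g$ and invoking the three-lines lemma yields
$$\|T_\sigma\|_{L^{p_\theta}\to L^{p_\theta}} \lesssim \sup_{j\in\Z}\|h_j\|_{L^{n/s_\theta,1/\theta}},$$
and since $L^{n/s_\theta,1} \hookrightarrow L^{n/s_\theta,1/\theta}$, this is dominated by the target $L^{n/s_\theta,1}$-norm.

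For a general $(p, s)$ satisfying the strict inequality \eqref{E:assumption_ps}, I choose $\theta$ with $p_\theta = p$; the strictness forces $s_\theta < s$. The localized Sobolev-Lorentz embedding
$$\|(I-\Delta)^{s_\theta/2}[\wh\Psi\sigma(2^j\cdot)]\|_{L^{n/s_\theta,1}} \le C\|(I-\Delta)^{s/2}[\wh\Psi\sigma(2^j\cdot)]\|_{L^{n/s,1}},$$
which is valid because each piece has Fourier support in the fixed annulus $\{1/2 < |\xi| < 2\}$, then upgrades the interpolated bound to the desired estimate at $(p, s)$. The range of $p$ on the opposite side of $2$ from $p_1$ is reached by duality, since $T_\sigma^* = T_{\bar\sigma}$ and the Sobolev-Lorentz symbol norm is invariant under complex conjugation.

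The principal technical obstacle is the rigorous construction of the analytic family $\sigma^z$: one must keep the Littlewood-Paley structure coherent across $z$, control the polynomial growth in $|t|$ from imaginary powers of $I-\Delta$ via the Gaussian damping, and produce a legitimate analytic Lorentz-space family $\{h_j^z\}$ with the stated boundary behavior. A benign but important subtlety is that complex interpolation of $L^\infty$ with $L^{n/s_1,1}$ yields the enlarged secondary-index space $L^{n/s_\theta, 1/\theta}$ rather than $L^{n/s_\theta, 1}$; fortunately this occurs in the favorable direction and is absorbed by an elementary embedding.
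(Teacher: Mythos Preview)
Your overall strategy---complex interpolation between the hypothesis at $(p_1,s_1)$ and an $L^2$ endpoint, followed by duality---matches the paper's. However, there is a genuine gap at the left boundary $\Re z=0$ of your analytic family. You need $\sigma^{it}\in L^\infty$ in order to invoke Plancherel, and your construction gives the $j$-th piece as $\wh\Theta\,(I-\Delta)^{-it s_1/2}h_j^{it}$ with $h_j^{it}\in L^\infty$. But the imaginary Bessel powers $(I-\Delta)^{-it s_1/2}$ are \emph{not} bounded on $L^\infty$: they are Calder\'on--Zygmund operators, bounded on $L^p$ only for $1<p<\infty$. Multiplication by the smooth compactly supported cutoff $\wh\Theta$ does not restore boundedness, and the Gaussian damping cannot help either, since the failure is qualitative rather than a matter of polynomial growth in $|t|$.

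The paper avoids this by interpolating not between $(2,0)$ and $(p_1,s_1)$ but between $(2,s_0)$ and $(p_1,s_1)$ for a suitable $s_0\in(0,s)$, chosen so that $\theta=(s-s_0)/(s_1-s_0)$ also satisfies $1/p=(1-\theta)/2+\theta/p_1$; the strict inequality \eqref{E:assumption_ps} is precisely what allows $s_0>0$. At $\Re z=0$ the analytic symbol then carries a genuine smoothing factor $(I-\Delta)^{-s_0/2}$, and one obtains $\sigma_{it}\in L^\infty$ via the sharp Sobolev--Lorentz embedding $\|(I-\Delta)^{-s_0/2}g\|_{L^\infty}\le C\|g\|_{L^{n/s_0,1}}$ (Lemma~\ref{L:sobolev_embedding}), after transferring the imaginary power onto $L^{n/s_0,1}$, where it \emph{is} bounded with polynomial growth in $|t|$ (Lemma~\ref{L:interpolation}). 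Moreover, the paper builds the family directly from $\varphi_j=(I-\Delta)^{s/2}[\wh\Psi\sigma(2^j\cdot)]$ rather than from $(I-\Delta)^{s_\theta/2}[\cdots]$, and uses the measure-preserving-transformation device together with Lemma~\ref{L:fractional_maximal_function} so that \emph{both} endpoint bounds emerge directly in terms of $\|\varphi_j\|_{L^{n/s,1}}$; your second ``upgrade'' step via a localized Sobolev--Lorentz embedding is then unnecessary.
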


Assuming Theorem~\ref{T:interpolation}, and using the estimate from Theorem~\ref{T:endpoint} as the assumption~\eqref{E:assumption}, we finish the proof of our main result, Theorem~\ref{T:main_theorem}, as follows.

\begin{proof}[Proof of Theorem~\ref{T:main_theorem}]
If $s\in (\frac{n}{2},n)$, then inequality~\eqref{E:inequality} follows from Theorem~\ref{T:endpoint}. If $s\leq \frac{n}{2}$, then we denote
$$
\alpha=\frac{1}{s}\left|\frac{1}{p}-\frac{1}{2}\right|. 
$$
Since $\alpha \in (0,\frac{1}{n})$, we can find $p_1\in (1,\infty)$ and $s_1\in (\frac{n}{2},n)$ such that 
$$
\alpha <\frac{1}{s_1}\left|\frac{1}{p_1}-\frac{1}{2}\right|. 
$$
A combination of Theorems~\ref{T:endpoint} and~\ref{T:interpolation} thus yields the desired assertion~\eqref{E:inequality}.
\end{proof}

Let us now focus on the proof of Theorem~\ref{T:interpolation}. The main idea of the proof consists in applying a complex interpolation between the estimate~\eqref{E:assumption} and the usual $L^2$ estimate implied by the Plancherel theorem. 

To prove Theorem~\ref{T:interpolation} we shall need a few auxiliary results. With start by recalling the classical three lines lemma.

\begin{lemma}[{\cite{CFA, hirschman}}]\label{L:ThreeLines} 
  Let $F$ be analytic on the open strip $S=\{z\in\mathbb{C}\ :\ 0<\Re(z)<1\}$ and continuous on its closure.
  Assume that for every  $0\le \tau \le 1$ there exists a function $A_\tau$ on the { real} line such that
$$
    | F(\tau+it) |  \le A_\tau(t)    \qquad \textup{  for all $t\in\mathbb{R}$,}
$$
and suppose that there exist constants   $A>0$ and $0<a<\pi$ such that for all $t\in \mathbb R$ we have
  $$
0<  A_\tau(t) \le \exp \big\{ A e^{a |t|} \big\} \, .
  $$
 Then   for  $0<\theta<1 $   we have
  $$
  \abs{F(\theta )}\le \exp\left\{
  \dfrac{\sin(\pi \theta)}{2}\int_{-\infty}^{\infty}\left[
  \dfrac{\log |A_0(t ) | }{\cosh(\pi t)-\cos(\pi\theta)}
  +
  \dfrac{\log | A_1(t )| }{\cosh(\pi t)+\cos(\pi\theta)}
  \right]dt
  \right\}\, .
  $$
\end{lemma}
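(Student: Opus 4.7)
The strategy is to transfer the three lines inequality to the classical Poisson representation on the upper half-plane via a conformal change of variables, using the sub-exponential growth hypothesis to justify a Phragm\'en--Lindel\"of argument.

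First, introduce the conformal map $\phi(z)=e^{i\pi z}$, which sends the strip $S$ bijectively onto the upper half-plane $\mathbb{H}=\{\Im w>0\}$. A direct calculation shows that $\phi$ carries the boundary line $\Re z=0$ onto the positive real axis via $it\mapsto e^{-\pi t}$ and the line $\Re z=1$ onto the negative real axis via $1+it\mapsto -e^{-\pi t}$, while the interior point $\theta\in(0,1)$ is sent to $e^{i\pi\theta}\in\mathbb{H}$. Set $G(w)=F(\phi^{-1}(w))$ on $\mathbb{H}$. Since $\phi^{-1}(w)=-i\pi^{-1}\log w$, we have $|\Im\phi^{-1}(w)|=\pi^{-1}|\log|w||$, so the growth bound on $F$ translates to
$$
|G(w)|\le\exp\bigl(A\,e^{a\pi^{-1}|\log|w||}\bigr)=\exp\bigl(A\,(|w|^{a/\pi}+|w|^{-a/\pi})\bigr),
$$
which is strictly sub-exponential both at $\infty$ and at $0$ because $a/\pi<1$.

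Next, obtain the Poisson majorant. Multiplying $G$ by the auxiliary factor $\exp(-\varepsilon(w^\alpha+w^{-\alpha}))$, with principal branch and any fixed $\alpha\in(a/\pi,1)$, produces a bounded holomorphic function on $\mathbb{H}$, so the standard Poisson representation for bounded subharmonic functions applies; letting $\varepsilon\to 0^+$ via Fatou/dominated convergence gives
$$
\log|G(w_0)|\le\frac{1}{\pi}\int_{-\infty}^{\infty}\frac{\Im w_0}{|w_0-s|^{2}}\,\log|G(s)|\,ds,\qquad w_0\in\mathbb{H}.
$$
Specializing to $w_0=e^{i\pi\theta}$ and splitting the integral over the two rays, the substitutions $s=e^{-\pi t}$ on $(0,\infty)$ and $s=-e^{-\pi t}$ on $(-\infty,0)$ turn the kernel $\frac{\sin\pi\theta}{\pi(s^{2}\mp 2s\cos\pi\theta+1)}$, together with the Jacobian $|ds|=\pi e^{-\pi t}\,dt$ and the identity $e^{\pi t}+e^{-\pi t}=2\cosh\pi t$, into
$$
\frac{\sin\pi\theta}{2(\cosh\pi t-\cos\pi\theta)}\,dt\quad\text{and}\quad\frac{\sin\pi\theta}{2(\cosh\pi t+\cos\pi\theta)}\,dt
$$
on the two rays, respectively. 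Since $|G(e^{-\pi t})|\le A_0(t)$ and $|G(-e^{-\pi t})|\le A_1(t)$, exponentiating yields exactly the stated bound for $|F(\theta)|$.

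The main obstacle is the Phragm\'en--Lindel\"of step: $\log|G|$ is only bounded above by a sub-exponential function rather than being bounded, so the Poisson formula cannot be invoked directly. The strict inequality $a<\pi$ is precisely what allows the auxiliary exponent $\alpha$ to be chosen in $(a/\pi,1)$ so that the correction $\exp(-\varepsilon(w^\alpha+w^{-\alpha}))$ forces the product into the bounded regime while contributing nothing in the limit $\varepsilon\to 0^+$; the remainder of the argument is a routine change-of-variables bookkeeping.
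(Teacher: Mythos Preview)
The paper does not supply a proof of this lemma; it is quoted from \cite{CFA, hirschman} and used as a black box. Your sketch follows the standard route to this result---conformally mapping the strip to the upper half-plane, invoking the Poisson representation for subharmonic functions there, and justifying it by a Phragm\'en--Lindel\"of damping argument---and the change-of-variables bookkeeping you record is correct.

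Two small points. First, the equality $e^{(a/\pi)|\log|w||}=|w|^{a/\pi}+|w|^{-a/\pi}$ is not literally true (the left side equals $\max(|w|^{a/\pi},|w|^{-a/\pi})$); you only need the inequality $\le$, which does hold, so this is harmless.

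Second, and more substantively, your auxiliary factor $\exp(-\varepsilon(w^\alpha+w^{-\alpha}))$ with $\alpha\in(a/\pi,1)$ does not do the job in the full range $0<a<\pi$. For $w=re^{i\psi}$ with $0\le\psi\le\pi$ one has $\Re(w^\alpha+w^{-\alpha})=(r^\alpha+r^{-\alpha})\cos(\alpha\psi)$, and $\cos(\alpha\psi)$ becomes negative once $\alpha\psi>\pi/2$; in particular, on the negative real boundary ($\psi=\pi$) the real part is $(r^\alpha+r^{-\alpha})\cos(\alpha\pi)$, which is negative whenever $\alpha>1/2$. Thus your damping factor actually blows up there rather than taming $G$, and the argument as written only covers $a<\pi/2$. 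The fix is routine: replace $w^\alpha+w^{-\alpha}$ by $(-iw)^\alpha+(i/w)^\alpha$ (principal branches), which rotates the argument range to $[-\pi/2,\pi/2]$ and keeps the real part uniformly bounded below by $\cos(\alpha\pi/2)(r^\alpha+r^{-\alpha})>0$ for every $\alpha<1$. With that adjustment the Phragm\'en--Lindel\"of step goes through for the full hypothesis $0<a<\pi$, and the rest of your argument stands.
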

We point out that in calculations it is crucial to note that
\begin{equation}\label{ide}
\dfrac{\sin(\pi \theta)}{2}\int_{-\infty}^{\infty}
  \dfrac{dt }{\cosh(\pi t)-\cos(\pi\theta)}  =1-\theta\, , \quad
  \dfrac{\sin(\pi \theta)}{2}\int_{-\infty}^{\infty}
  \dfrac{dt }{\cosh(\pi t)+\cos(\pi\theta)} = \theta.
\end{equation}

\medskip

We shall also need the following lemma.

\begin{lemma}\label{L:016}
  Let $1<p, p_1<\infty$ be related as in $1/p=(1-\theta)/2+\theta/p_1$ for some $\theta \in (0,1)$. 
  Given $f\in {\mathscr C}_0^\nf(\mathbb R^n)$ and   $\ve>0,$ there exist  
  smooth functions $h_j^\ve$, $j=1,\dots, N_\ve$, supported in  cubes   on $\mathbb R^n$ with pairwise disjoint interiors,   
  and nonzero complex constants $c_j^\ve$ such that  the functions
  \begin{equation}\label{E:form}
  f_z^\ve  =  \sum_{j=1}^{N_\ve} |c_j^\ve|^{\frac p{2} (1-z) +   \frac p{p_1}  z}  \, h_j^\ve
  \end{equation}
satisfy
$$
 \big\|{f_\theta^\ve-f}\big\|_{L^2(\R^n)}<  \ve  
 $$
and 
$$
  \|{f_{it}^\ve}\|_{L^{2}(\R^n)} \leq   \left(\|f \|_{L^p(\R^n)} +\ve\right)^{\frac{p}{2}}  \, , \quad
  \|{f_{1+it}^\ve}\|_{L^{p_1}(\R^n)}\leq     \left(\|f \|_{L^p(\R^n)}  +\ve\right)^{\frac{p}{p_1}}\,.
$$
\end{lemma}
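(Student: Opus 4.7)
The plan is to approximate $f$ by a finite sum of phase-weighted smooth bumps with pairwise disjoint supports, and to arrange the data $(c_j^\ve, h_j^\ve)$ so that the algebraic form $|c_j^\ve|^{\frac{p}{2}(1-z)+\frac{p}{p_1}z}\, h_j^\ve$ automatically produces the correct behavior at $z = \theta$, $\Re z = 0$, and $\Re z = 1$. The key algebraic observation is that the exponent equals $1$ at $z = \theta$ (since $\frac{p}{2}(1-\theta) + \frac{p}{p_1}\theta = p \cdot \frac{1}{p} = 1$), has real part $p/2$ on the line $\Re z = 0$, and real part $p/p_1$ on $\Re z = 1$, while the imaginary part of the exponent contributes only unimodular factors.

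Concretely, I would first approximate $f$ simultaneously in $L^2$, $L^p$, and $L^{p_1}$ by a finite simple function $s = \sum_{j=1}^N a_j \chi_{E_j}$, where the $E_j$ are pairwise disjoint cubes contained in a large cube enclosing $\supp f$ and $a_j \in \mathbb{C} \setminus \{0\}$. This is standard: partition a large cube on a fine dyadic grid and take $a_j$ to be the average of $f$ on $E_j$, discarding those cubes where the average vanishes; since $f$ is Schwartz, the error $\|s - f\|_{L^r}$ can be made arbitrarily small for $r \in \{2, p, p_1\}$ simultaneously. I would then replace each $\chi_{E_j}$ by a smooth bump $\tilde\chi_j$ supported in a slightly shrunken subcube of $E_j$, satisfying $0 \le \tilde\chi_j \le 1$ and $\|\tilde\chi_j\|_{L^r}^r = |E_j|(1+o(1))$ uniformly for $r \in \{2, p, p_1\}$. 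Finally, set $c_j^\ve = |a_j|$ and $h_j^\ve = (a_j/|a_j|)\tilde\chi_j$; these are smooth, nonzero, and have pairwise disjoint supports inside the $E_j$. Since the exponent at $z = \theta$ is $1$, this gives $f_\theta^\ve = \sum_j a_j \tilde\chi_j$, which lies within $o(1)$ of $s$, and hence of $f$, in $L^2$.

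For the boundary estimates, the disjoint supports of the $h_j^\ve$ yield $|f_{it}^\ve(x)| = \sum_j |a_j|^{p/2}\tilde\chi_j(x)$ and $|f_{1+it}^\ve(x)| = \sum_j |a_j|^{p/p_1}\tilde\chi_j(x)$, so
\begin{align*}
\|f_{it}^\ve\|_{L^2}^2 &= \sum_j |a_j|^p \|\tilde\chi_j\|_{L^2}^2 \le (1+o(1))\sum_j |a_j|^p |E_j| = (1+o(1))\|s\|_{L^p}^p, \\
\|f_{1+it}^\ve\|_{L^{p_1}}^{p_1} &= \sum_j |a_j|^p \|\tilde\chi_j\|_{L^{p_1}}^{p_1} \le (1+o(1))\|s\|_{L^p}^p,
\end{align*}
and since $\|s\|_{L^p} \le \|f\|_{L^p} + o(1)$, choosing the grid and the shrinkage fine enough in terms of $\ve$ and $\|f\|_{L^p}$ delivers the required inequalities. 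The main obstacle, such as it is, amounts to bookkeeping: one needs the $L^r$ errors to be simultaneously small for the three values of $r$ across both approximation steps. This is harmless because the supports all lie in a fixed compact set, $L^r$ norms on each $E_j$ are controlled by $|E_j|^{1/r}$ times bounded pointwise errors, and the sums over $j$ decompose additively thanks to the disjointness of supports.
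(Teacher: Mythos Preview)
Your proposal is correct and follows essentially the same approach as the paper: approximate $f$ by a simple function on pairwise disjoint cubes, smooth the indicators, split each coefficient into modulus and phase (taking $c_j^\ve$ as the modulus and absorbing the phase into $h_j^\ve$), and use disjointness of supports to compute the boundary norms. The only cosmetic differences are that you track the $(1+o(1))$ factors coming from the smoothing step explicitly (the paper writes the resulting near-equalities as equalities), and you mention approximating in $L^{p_1}$ as well, which is harmless but not actually used anywhere in your argument---only the $L^2$ and $L^p$ approximations are needed.
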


\begin{proof}
Given $f\in {\mathscr C}_0^\nf(\mathbb R^n)$  and   $ \ve>0$, by   uniform continuity
there are $N_\ve$   cubes $Q_j^\ve$ (with disjoint interiors) and constants $c_j^\ve $ such that
$$
\Big\| f - \sum_{j=1}^{N_\ve} c_j^\ve \chi_{Q_j^\ve} \Big\|_{L^2(\R^n)} + \Big\| f - \sum_{j=1}^{N_\ve} c_j^\ve \chi_{Q_j^\ve} \Big\|_{L^p(\R^n)}  <\ve    \, .  
$$
Find nonnegative smooth functions   $g_j^\ve\le \chi_{Q_j^\ve}$ such that
$$
\Big\| \sum_{j=1}^{N_\ve} c_j^\ve (  g_j^\ve-    \chi_{Q_j^\ve}   )  \Big\|_{L^2(\R^n)} + \Big\| \sum_{j=1}^{N_\ve} c_j^\ve (  g_j^\ve-    \chi_{Q_j^\ve}   )  \Big\|_{L^p(\R^n)}  <\ve.
$$
Let $\phi_j^\ve$ be the argument of the complex number $c_j^\ve$. 
Set $h_j^\ve = e^{i\phi_j^\ve} g_j^\ve$ and
notice that 
$ f_\theta^\ve = \sum_{j=1}^{N_\ve} |c_j^\ve| h_j^\ve =  \sum_{j=1}^{N_\ve}  c_j^\ve  g_j^\ve $  satisfies 
$$
 \big\|{f_\theta^\ve-f}\big\|_{L^2(\R^n)} + \big\|{f_\theta^\ve-f}\big\|_{L^p(\R^n)}<  \ve .
 $$
We also observe that
\begin{align*}
\|f_{it}^\ve\|_{L^2(\R^n)}^2
&=\|f_{1+it}^\ve\|_{L^{p_1}(\R^n)}^{p_1}=\|f_\theta^\ve\|_{L^p(\R^n)}^p\\
&\leq \left(\|f_\theta^\ve - f\|_{L^p(\R^n)} +\|f\|_{L^p(\R^n)}\right)^p \leq 
\left(\|f\|_{L^p(\R^n)}+\ve\right)^p,
\end{align*}
as claimed.
\end{proof}

The next three lemmas generalize results which are well known in the context of Lebesgue spaces into the setting of Lorentz spaces $L^{p,1}(\R^n)$.

\begin{lemma}\label{L:sobolev_embedding}
Let $0<s<n$. Then
$$
\|(I-\De)^{-\frac{s}{2}} f\|_{L^\infty(\R^n)} \leq C(n) \frac{s}{n-s} \|f\|_{L^{\frac{n}{s},1}(\R^n)}. 
$$
\end{lemma}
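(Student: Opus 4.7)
The plan is to write $(I-\Delta)^{-s/2}$ as convolution with the Bessel kernel $G_s$ (defined by $\wh{G_s}(\xi) = (1+4\pi^2|\xi|^2)^{-s/2}$) and then apply H\"older's inequality in Lorentz spaces to reduce the $L^\infty$ bound to controlling a single weak-Lorentz norm of $G_s$. Since $1 = \frac{n-s}{n} + \frac{s}{n}$, the H\"older inequality in Lorentz spaces (recalled earlier in the paper) gives
$$
\|(I-\De)^{-s/2} f\|_{L^\infty(\R^n)} = \|G_s * f\|_{L^\infty(\R^n)} \leq \|G_s\|_{L^{n/(n-s),\infty}(\R^n)} \|f\|_{L^{n/s,1}(\R^n)}.
$$
Everything therefore comes down to proving $\|G_s\|_{L^{n/(n-s),\infty}(\R^n)} \leq C(n) \frac{s}{n-s}$ with the correct $s$-dependence.

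For the weak-Lorentz norm estimate I would pass through the pointwise bound
$$
G_s(x) \leq C(n)\, \frac{s}{n-s}\, |x|^{s-n}, \qquad x\neq 0,
$$
obtained from the subordination formula
$$
G_s(x) = \frac{1}{(4\pi)^{s/2} \Gamma(s/2)} \int_0^\infty e^{-\pi|x|^2/t - t/(4\pi)} t^{(s-n)/2}\, \frac{dt}{t}.
$$
Dropping the factor $e^{-t/(4\pi)} \le 1$ and substituting $u=t/|x|^2$ collapses the integral to $\pi^{(s-n)/2}\Gamma((n-s)/2)$, so $G_s(x) \leq C_n \frac{\Gamma((n-s)/2)}{\Gamma(s/2)} |x|^{s-n}$. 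Writing $\Gamma(s/2) = \frac{2}{s}\Gamma(1+s/2)$ and $\Gamma((n-s)/2) = \frac{2}{n-s}\Gamma(1+(n-s)/2)$, the two remaining Gamma values are bounded above and below by constants depending only on $n$ for $s\in(0,n)$, which extracts exactly the factor $\frac{s}{n-s}$.

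To finish, a direct computation of the distribution function of $|x|^{s-n}$ yields $\||x|^{s-n}\|_{L^{n/(n-s),\infty}(\R^n)} = \omega_n^{(n-s)/n}$, which is bounded uniformly in $s\in(0,n)$ by a constant depending only on $n$. Inserting this into the H\"older inequality above gives the desired estimate with constant $C(n)\frac{s}{n-s}$.

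The main obstacle is the quantitative bookkeeping in the pointwise bound on $G_s$: the classical statement $G_s(x) \leq C_{n,s}|x|^{s-n}$ with exponential decay at infinity is well known, but to obtain the sharp prefactor $\frac{s}{n-s}$ one has to track the behavior of $\Gamma(s/2)$ near $s=0$ and of $\Gamma((n-s)/2)$ near $s=n$ through the subordination integral. Once this pointwise bound is in place, the Lorentz-space H\"older step and the $L^{n/(n-s),\infty}$-norm of $|x|^{s-n}$ are routine.
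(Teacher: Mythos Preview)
Your argument is correct and follows exactly the same route as the paper: write $(I-\Delta)^{-s/2}f=G_s*f$, invoke the pointwise bound $G_s(x)\le C(n)\frac{s}{n-s}|x|^{s-n}$ from the subordination formula, and apply H\"older's inequality in Lorentz spaces with the pair $L^{(n/s)',\infty}$--$L^{n/s,1}$. The only difference is that you actually justify the constant $\frac{s}{n-s}$ in the pointwise bound via the Gamma function identities, whereas the paper simply asserts this estimate as ``not difficult to show.''
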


\begin{proof}
Let $G_s$ be the function defined for any $x\in \R^n$ by
$$
G_s(x)=\frac{1}{(4\pi)^{\frac{s}{2}}\Gamma(\frac{s}{2})} \int_0^\infty e^{-\frac{\pi|x|^2}{\delta}} e^{-\frac{\delta}{4\pi}} \delta^{\frac{-n+s}{2}} \frac{\,d\delta}{\delta}.
$$
It is not difficult to show that $G_s(x)\leq C(n)\frac{s}{n-s} |x|^{-n+s}$. Therefore,
\begin{align*}
|(I-\De)^{-\frac{s}{2}} f(x)|=|G_s\ast f(x)|
&\leq \int_{\R^n} G_s(y) |f(x-y)|\,dy
\leq \|G_s\|_{L^{(\frac{n}{s})',\infty}(\R^n)} \|f\|_{L^{\frac{n}{s},1}(\R^n)}\\
&\leq C(n) \frac{s}{n-s} \|f\|_{L^{\frac{n}{s},1}(\R^n)}.
\end{align*}
\end{proof}

\begin{lemma}\label{L:interpolation}
Let $1<a<b<\infty$. Then, for any $p\in (a,b)$ and any $t\in \R$, 
$$
\|(I-\De)^{-it}f\|_{L^{p,1}(\R^n)} \leq C(n,a,b) (1+|t|)^{\frac{n}{2}+1} \|f\|_{L^{p,1}(\R^n)}. 
$$
\end{lemma}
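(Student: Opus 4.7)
The plan is to bootstrap the $L^{p,1}$-bound from an ordinary $L^r$-boundedness statement for $(I-\Delta)^{-it}$ on the full Lebesgue scale, using real interpolation to bridge from $L^r$ to $L^{p,1}$.

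First, I would show that for every $r \in (1,\infty)$ and every $t \in \R$,
\begin{equation*}
\|(I-\Delta)^{-it} f\|_{L^r(\R^n)} \leq C(n,r)(1+|t|)^{n/2+1} \|f\|_{L^r(\R^n)}.
\end{equation*}
This follows from Mikhlin's multiplier theorem applied to the symbol $\sigma_t(\xi) = e^{-it\phi(\xi)}$, where $\phi(\xi) = \log(1+4\pi^2|\xi|^2)$. A direct inspection (using that $\phi$ is smooth at $0$ with $\phi(\xi) \sim 4\pi^2|\xi|^2$ there, and that $\phi(\xi) \sim 2\log|\xi|$ at infinity) gives the bound $|\partial^\beta \phi(\xi)| \leq C_\beta |\xi|^{-|\beta|}$ for every multi-index $\beta$ with $|\beta|\geq 1$ and every $\xi \neq 0$. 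Faà di Bruno's formula applied to $\partial^\alpha (e^{-it\phi})$ then yields
\begin{equation*}
|\partial^\alpha \sigma_t(\xi)| \leq C_\alpha (1+|t|)^{|\alpha|} |\xi|^{-|\alpha|}, \qquad \xi \neq 0,
\end{equation*}
since each term in the expansion is a product of at most $|\alpha|$ factors of the form $-it\,\partial^{\beta_i}\phi$ with $\sum_i |\beta_i| = |\alpha|$. Specializing to $|\alpha| \leq [n/2]+1$ and invoking Mikhlin produces the advertised Lebesgue bound.

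Next, I would fix auxiliary exponents $r_0 \in (1,a)$ and $r_1 \in (b,\infty)$. For each $p \in (a,b)$ there is a unique $\theta = \theta(p) \in (0,1)$ satisfying $\frac{1}{p} = \frac{1-\theta}{r_0} + \frac{\theta}{r_1}$, and $\theta(p)$ remains in a compact subset of $(0,1)$ depending only on $a,b,r_0,r_1$. The classical real interpolation identity
\begin{equation*}
(L^{r_0}(\R^n), L^{r_1}(\R^n))_{\theta,1} = L^{p,1}(\R^n)
\end{equation*}
(with norm-equivalence constants controlled in terms of $\theta$) combined with the exact interpolation theorem for the real method yields
\begin{equation*}
\|(I-\Delta)^{-it}\|_{L^{p,1}\to L^{p,1}} \leq C \|(I-\Delta)^{-it}\|_{L^{r_0}\to L^{r_0}}^{1-\theta} \|(I-\Delta)^{-it}\|_{L^{r_1}\to L^{r_1}}^{\theta}
\end{equation*}
with $C = C(n,a,b)$. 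Plugging in the first-step estimates then gives the claim.

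The main technical step is the verification of the Mikhlin-type derivative bound on $\sigma_t$ with the correct polynomial $|t|$-dependence; the subtlety there is that the derivative estimate on $\phi$ holds only for $|\beta|\geq 1$ (since $\phi$ itself is unbounded), but this is exactly the range of indices appearing in the Faà di Bruno expansion, so no harm is done. Once the derivative bound is in place, the rest of the argument is a routine application of classical tools.
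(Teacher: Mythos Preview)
Your proposal is correct and follows essentially the same two-step strategy as the paper: first establish Lebesgue-scale bounds for $(I-\Delta)^{-it}$ with the polynomial growth $(1+|t|)^{n/2+1}$ via a Mikhlin/H\"ormander multiplier argument, then pass to $L^{p,1}$ by interpolation. The only cosmetic difference is that the paper takes as endpoints the weak-type $(1,1)$ estimate and a strong $L^{b_0}$ estimate with $b_0=2b$, invoking the Marcinkiewicz theorem from Bennett--Sharpley, whereas you take two strong-type Lebesgue endpoints $r_0\in(1,a)$ and $r_1\in(b,\infty)$ and cite the real interpolation identity $(L^{r_0},L^{r_1})_{\theta,1}=L^{p,1}$; both routes are standard and equivalent here.
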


\begin{proof}
Set $b_0=2b$. By the H\"ormander multiplier theorem, one has
$$
\|(I-\De)^{-it}f\|_{L^{1,\infty}(\R^n)} \leq C(n) (1+|t|)^{\frac{n}{2}+1} \|f\|_{L^{1}(\R^n)} 
$$
and
$$
\|(I-\De)^{-it}f\|_{L^{b_0}(\R^n)} \leq C(n,b) (1+|t|)^{\frac{n}{2}+1} \|f\|_{L^{b_0}(\R^n)}.
$$
Notice that the second estimate implies, in particular, the corresponding weak-type inequality. An interpolation between these two estimates using the Marcinkiewicz interpolation theorem~\cite[Chapter 4, Theorem 4.13]{BS} yields the required assertion.
\end{proof}

\begin{lemma}\label{L:interpolation2}
Let $1<p<\infty$ and $s>0$, and let $\Psi$ be as in Theorem~\ref{T:main_theorem}. Then we have the a priori estimate 
\begin{equation}\label{E:kato_ponce}
\|(I-\De)^{\frac{s}{2}}[\wh{\Psi}f]\|_{L^{p,1}(\R^n)} \leq C(n,s,p,\Psi) \|(I-\De)^{\frac{s}{2}}f\|_{L^{p,1}(\R^n)}. 
\end{equation}
\end{lemma}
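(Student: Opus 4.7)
Setting $g := (I-\De)^{s/2}f$, the claim~\eqref{E:kato_ponce} is equivalent to the $L^{p,1}(\R^n)$-boundedness of the operator $Sg := (I-\De)^{s/2}[\wh\Psi \cdot (I-\De)^{-s/2}g]$. I plan to establish this by complex interpolation. Fix a positive even integer $N>s$ and embed $S$ into the analytic family
$$
S_z g := (I-\De)^{Nz/2}\bigl[\wh\Psi \cdot (I-\De)^{-Nz/2}g\bigr], \qquad 0\le \Re z\le 1,
$$
so that $S = S_{s/N}$. The goal is to prove uniform $L^{p,1}$-boundedness of $S_{it}$ and $S_{1+it}$ with polynomial growth in $|t|$, and then apply the three-lines lemma (Lemma~\ref{L:ThreeLines}) to $F(z):=\int (S_z g)\phi\,dx$ for $g$ and $\phi$ in dense subsets of $L^{p,1}(\R^n)$ and $L^{p',\infty}(\R^n)$, respectively, concluding via the $L^{p,1}$--$L^{p',\infty}$ H\"older duality.

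\textbf{Boundary estimates.} On $\Re z=0$, $S_{it}$ factors as $(I-\De)^{iNt/2}\circ M_{\wh\Psi}\circ (I-\De)^{-iNt/2}$, where $M_{\wh\Psi}$ is multiplication by the bounded function $\wh\Psi$. Since multiplication by $\wh\Psi\in L^\infty$ is trivially bounded on $L^{p,1}$, and Lemma~\ref{L:interpolation} bounds the imaginary powers $(I-\De)^{\pm iNt/2}$ on $L^{p,1}$ by $C(1+|t|)^{n/2+1}$, we obtain $\|S_{it}\|_{L^{p,1}\to L^{p,1}}\le C(1+|t|)^{n+2}$. On $\Re z=1$, I exploit that $N$ is even, so $(I-\De)^{N/2}$ is a constant-coefficient partial differential operator of order $N$. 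Setting $h:=(I-\De)^{-N(1+it)/2}g$ and applying Leibniz's rule,
$$
(I-\De)^{N/2}[\wh\Psi h] = \sum_{|\alpha|+|\beta|\le N} c_{\alpha,\beta}\,(\partial^\alpha\wh\Psi)\,(\partial^\beta h),
$$
each $\partial^\beta h$ is the action on $g$ of the Fourier multiplier with symbol $m_{\beta,t}(\xi) = (2\pi i\xi)^\beta (1+4\pi^2|\xi|^2)^{-N(1+it)/2}$. For $|\beta|\le N$, a direct computation shows that $m_{\beta,t}$ satisfies Mikhlin's condition with constants polynomial in $|t|$; combining Mikhlin's $L^q$ theorem at two exponents $1<q_0<p<q_1<\infty$ with the real interpolation identity $(L^{q_0},L^{q_1})_{\theta,1}=L^{p,1}$ (Marcinkiewicz-type interpolation, as in Lemma~\ref{L:interpolation}) yields $L^{p,1}$-boundedness of each $\partial^\beta(I-\De)^{-N(1+it)/2}$ with polynomial time-dependence. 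Multiplication by the bounded Schwartz function $\partial^\alpha\wh\Psi$ and the outer factor $(I-\De)^{iNt/2}$ (again bounded on $L^{p,1}$ by Lemma~\ref{L:interpolation}) preserve these bounds, so $\|S_{1+it}\|_{L^{p,1}\to L^{p,1}}\le C(1+|t|)^A$ for some $A=A(n,s)$.

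\textbf{Conclusion and main obstacle.} For Schwartz $g$ and $\phi$, the function $F(z)=\int(S_zg)\phi\,dx$ is analytic on the open strip, continuous on its closure, and of at most polynomial (hence sub-exponential) growth on every vertical line, as the analyticity of $z\mapsto(1+4\pi^2|\xi|^2)^{Nz/2}$ and the Schwartz decay of $\wh g$ and $\wh\phi$ make all manipulations legitimate. Lemma~\ref{L:ThreeLines} applied at $z=s/N\in(0,1)$, combined with the identity~\eqref{ide} absorbing the integrated polynomial envelopes, yields
$$
|F(s/N)|\le C\,\|g\|_{L^{p,1}(\R^n)}\,\|\phi\|_{L^{p',\infty}(\R^n)},
$$
whence H\"older duality gives $\|Sg\|_{L^{p,1}}\le C\|g\|_{L^{p,1}}$ on the dense class, and this extends to the a priori estimate~\eqref{E:kato_ponce}. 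The main obstacle is the $\Re z=1$ endpoint: transferring Mikhlin-type bounds with $|t|$-polynomial constants from Lebesgue to the Lorentz space $L^{p,1}$, and ensuring that the compound polynomial growth of all endpoint constants is absorbed by the convergence of the integrals $\int \log A_\tau(t)/\cosh(\pi t)\,dt$ appearing in Lemma~\ref{L:ThreeLines}.
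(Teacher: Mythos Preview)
Your argument is essentially correct, but it takes a substantially longer route than the paper's. The paper observes that the operator $Tg=(I-\De)^{s/2}[\wh\Psi\,(I-\De)^{-s/2}g]$ is bounded on $L^{p_0}(\R^n)$ and on $L^{p_1}(\R^n)$ for any $1<p_0<p<p_1<\infty$ as a direct consequence of the Kato--Ponce inequality (applied with the fixed Schwartz factor $\wh\Psi$), and then a single application of the Marcinkiewicz interpolation theorem for Lorentz spaces yields the $L^{p,1}$ bound. That is the entire proof.

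Your approach replaces the black-box use of Kato--Ponce by an explicit complex-interpolation construction: you embed $T$ into the family $S_z$, treat $\Re z=0$ via imaginary Bessel powers and $\Re z=1$ via the Leibniz rule (exploiting that $N$ is an even integer) combined with Mikhlin estimates, and then run the three-lines lemma directly on $L^{p,1}$. This is in effect a self-contained proof of the special case of Kato--Ponce that is needed, so what you gain is independence from that reference; what you lose is brevity, since the paper's argument is three sentences. A small technical point worth tightening in your write-up is the duality step: Schwartz functions are not dense in $L^{p',\infty}$, so you should argue that bounded compactly supported (hence $L^{p'}$) test functions $\phi$ already norm $L^{p,1}$ up to a constant, and that for such $\phi$ the pairing $\int(S_zg)\phi$ is analytic and the passage from Schwartz to general $g\in L^{p,1}$ goes through by density. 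Alternatively, you could run your complex interpolation on $L^{p_0}$ and $L^{p_1}$ separately (where duality is unproblematic) and then apply Marcinkiewicz once at the end, which would bring your argument structurally closer to the paper's while still avoiding Kato--Ponce.
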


\begin{proof}
Pick real numbers $p_0$, $p_1$ satisfying $1<p_0<p<p_1<\infty$. Denote by $T$ the linear operator defined by
$$
Tf=(I-\Delta)^{\frac{s}{2}} [\wh{\Psi}(I-\Delta)^{-\frac{s}{2}}f].
$$
Thanks to the Kato-Ponce inequality, $T$ is bounded on both $L^{p_0}(\R^n)$ and $L^{p_1}(\R^n)$, so, in particular, it is of weak type $(p_0,p_0)$ and $(p_1,p_1)$.  
By the Marcinkiewicz interpolation theorem~\cite[Chapter 4, Theorem 4.13]{BS}, $T$ is bounded on $L^{p,1}(\R^n)$, which yields~\eqref{E:kato_ponce}.
\end{proof}

The final auxiliary result we shall need is the following.

\begin{lemma}\label{L:fractional_maximal_function}
Let $0<a<s<n$. Then
\begin{equation}\label{E:sunrise}
\int_0^\infty (f^*(r) r^{\frac{s-a}{n}})^*(y) y^{\frac{a}{n}-1}\,dy \leq \frac{C(n)}{a} \int_0^\infty f^*(r) r^{\frac{s}{n}-1}\,dr. 
\end{equation}
\end{lemma}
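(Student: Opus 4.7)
The idea is to expand $\phi(r) := f^*(r) r^{(s-a)/n}$ as a positive superposition of elementary functions via the layer-cake formula, evaluate the $L^{n/a,1}$ quasi-norm of each layer in closed form, and sum by means of a Minkowski-type integral inequality for the Lorentz functional $T(g) := \int_0^\infty g^*(y) y^{a/n-1}\,dy$.

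Since $f^*$ is nonincreasing and equimeasurable with $f$, the layer-cake formula yields $f^*(r) = \int_0^\infty \chi_{(0,\rho(\mu))}(r)\,d\mu$, where $\rho(\mu) := |\{x \in \R^n : |f(x)| > \mu\}|$; multiplying by $r^{(s-a)/n}$ gives $\phi(r) = \int_0^\infty r^{(s-a)/n}\chi_{(0,\rho(\mu))}(r)\,d\mu$. To apply Minkowski, introduce the auxiliary norm $\widetilde T(g) := \int_0^\infty g^{**}(y) y^{a/n-1}\,dy$; a direct computation based on $\int_z^\infty y^{a/n-2}\,dy = \frac{n}{n-a} z^{a/n-1}$ shows $\widetilde T = \frac{n}{n-a}\, T$. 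Because $g \mapsto g^{**}$ is monotone, the inequality $\int_E \phi \leq \int \bigl(\int_E r^{(s-a)/n}\chi_{(0,\rho(\mu))}(r)\,dr\bigr)\,d\mu$ (Fubini) taken to the supremum over $|E|=y$ on both sides yields $\phi^{**}(y) \leq \int_0^\infty [r^{(s-a)/n}\chi_{(0,\rho(\mu))}]^{**}(y)\,d\mu$, and integrating against $y^{a/n-1}$, then dividing by the common factor $\frac{n}{n-a}$, produces
\begin{equation*}
T(\phi) \leq \int_0^\infty T\bigl(r^{(s-a)/n}\chi_{(0,\rho(\mu))}(r)\bigr)\,d\mu.
\end{equation*}

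The key explicit computation: on the interval $(0, R)$ the increasing function $r \mapsto r^{(s-a)/n}$ has decreasing rearrangement $y \mapsto (R-y)^{(s-a)/n}\chi_{(0,R)}(y)$, so the substitution $y = Rt$ and the beta-function identity give
\begin{equation*}
T\bigl(r^{(s-a)/n}\chi_{(0,R)}(r)\bigr) = \int_0^R (R-y)^{(s-a)/n} y^{a/n-1}\,dy = \frac{\Gamma((s-a)/n+1)\,\Gamma(a/n)}{\Gamma(s/n+1)}\,R^{s/n}.
\end{equation*}
Plugging in $R = \rho(\mu)$ and invoking the companion layer-cake identity $\int_0^\infty \rho(\mu)^{s/n}\,d\mu = (s/n)\|f\|_{L^{n/s,1}(\R^n)}$, which is obtained by writing $\int_0^\infty f^*(r)r^{s/n-1}\,dr = \int_0^\infty \int_0^{\rho(\mu)} r^{s/n-1}\,dr\,d\mu$, we arrive at
\begin{equation*}
T(\phi) \leq \frac{s}{n}\cdot\frac{\Gamma((s-a)/n+1)\,\Gamma(a/n)}{\Gamma(s/n+1)}\,\|f\|_{L^{n/s,1}(\R^n)}.
\end{equation*}

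To conclude, $\Gamma(a/n) = (n/a)\,\Gamma(1+a/n)$ extracts the required $1/a$ factor, while the remaining $\Gamma$ arguments $1+(s-a)/n,\, 1+a/n,\, 1+s/n$ all lie in $[1,2]$ (because $0<a<s<n$), a range on which $\Gamma$ is bounded between the absolute constants $\Gamma(x^*)\approx 0.886$ and $1$. Therefore the prefactor is bounded by $C(n)/a$, as required. The main delicate step is establishing the Minkowski integral inequality for the $L^{n/a,1}$ quasi-norm with a constant uniform in $a$ and $s$; this is handled by passing through the Banach norm $\widetilde T$, where the Fubini argument yields constant one, and then cancelling the equivalence factor $n/(n-a)$ on both sides.
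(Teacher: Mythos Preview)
Your argument is correct but proceeds along a genuinely different route from the paper's proof. The paper bounds $(f^*(r)r^{(s-a)/n})^*(y)$ above by the one-sided running supremum $\sup_{r\ge y} f^*(r)r^{(s-a)/n}$ and then applies a rising-sun decomposition: the set where this supremum strictly exceeds $f^*(y)y^{(s-a)/n}$ is open, hence a disjoint union of intervals $(a_k,b_k)$, and on each such interval the supremum is the constant $f^*(b_k)b_k^{(s-a)/n}$. A short computation comparing $\int_{a_k}^{b_k} y^{a/n-1}\,dy$ to $\int_{b_k/2}^{b_k} y^{a/n-1}\,dy$ then recovers the factor $C(n)/a$.

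Your approach instead linearizes via the layer-cake decomposition $f^*(r)=\int_0^\infty \chi_{(0,\rho(\mu))}(r)\,d\mu$, pushes the Lorentz functional inside the $\mu$-integral by passing to the genuine norm $\widetilde T(g)=\int_0^\infty g^{**}(y)y^{a/n-1}\,dy=\tfrac{n}{n-a}T(g)$ (so that the Minkowski inequality holds with constant~$1$ and the equivalence factor cancels), and evaluates each layer by an exact beta-integral computation. The $1/a$ then emerges cleanly from the pole of $\Gamma$ at the origin via $\Gamma(a/n)=(n/a)\Gamma(1+a/n)$, and the remaining $\Gamma$-values are uniformly bounded on $[1,2]$ while the leftover factor $s$ is absorbed into $C(n)$ since $s<n$. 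This yields a more explicit constant than the paper's argument and avoids any maximal-function or sunrise machinery; the paper's proof, by contrast, is entirely elementary real-variable analysis with no special functions. Both extract the blow-up rate $1/a$ from the same source (the integral $\int_0^{b} y^{a/n-1}\,dy\sim n b^{a/n}/a$), just organized differently.
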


\begin{proof}
Estimates of this type are known in the literature, see, e.g., \cite{EO}. For the convenience of the reader, we also provide an elementary proof of inequality~\eqref{E:sunrise}. The proof follows the ideas of~\cite[Section 9]{CPS}.

We may assume that 
$$
\int_0^\infty f^*(r) r^{\frac{s}{n}-1}\,dr <\infty.
$$
Then $f^*(r) r^{\frac{s}{n}}\leq C$, and thus $\lim_{r\to \infty} f^*(r) r^{\frac{s-a}{n}}=0$. Since the function $f^*$ is left-continuous, $\sup_{y\leq r<\infty} f^*(r) r^{\frac{s-a}{n}}$ is attained for any $y>0$ and the set
$$
M=\{y\in (0,\infty): \sup_{y\leq r<\infty} f^*(r) r^{\frac{s-a}{n}} > f^*(y) y^{\frac{s-a}{n}}\}
$$
is open. Hence, $M$ is a countable union of open intervals, namely, $M=\bigcup_{k\in S} (a_k,b_k)$, where $S$ is a countable set of positive integers. Also, observe that if $y\in (a_k,b_k)$, then $\sup_{y\leq r<\infty} f^*(r) r^{\frac{s-a}{n}}=f^*(b_k) b_k^{\frac{s-a}{n}}$. We have
\begin{align*}
\int_0^\infty (f^*(r) r^{\frac{s-a}{n}})^*(y) y^{\frac{a}{n}-1}\,dy
&\leq \int_0^\infty \sup_{y\leq r<\infty} f^*(r) r^{\frac{s-a}{n}} y^{\frac{a}{n}-1}\,dy\\
&=\int_{(0,\infty)\setminus \cup_{k\in S} (a_k,b_k)} f^*(y) y^{\frac{s}{n}-1}\,dy
+\sum_{k\in S} f^*(b_k) b_k^{\frac{s-a}{n}} \int_{a_k}^{b_k} y^{\frac{a}{n}-1}\,dy.
\end{align*}

Furthermore, for every $k\in S$,
\begin{align*}
f^*(b_k) b_k^{\frac{s-a}{n}} \int_{a_k}^{b_k} y^{\frac{a}{n}-1}\,dy
&\leq f^*(b_k) b_k^{\frac{s-a}{n}}\int_{\max(a_k,\frac{b_k}{2})}^{b_k} y^{\frac{a}{n}-1}\,dy \cdot \frac{\int_0^{b_k} y^{\frac{a}{n}-1}\,dy}{\int_{\frac{b_k}{2}}^{b_k} y^{\frac{a}{n}-1}\,dy}\\
&=\frac{1}{1-(\frac{1}{2})^{\frac{a}{n}}} f^*(b_k) b_k^{\frac{s-a}{n}} \int_{\max(a_k,\frac{b_k}{2})}^{b_k} y^{\frac{a}{n}-1}\,dy\\
&\leq \frac{2^{\frac{s-a}{n}}}{1-(\frac{1}{2})^{\frac{a}{n}}} \int_{a_k}^{b_k} f^*(y) y^{\frac{s}{n}-1}\,dy\\
&\leq \frac{C(n)}{a} \int_{a_k}^{b_k} f^*(y) y^{\frac{s}{n}-1}\,dy.
\end{align*}
Therefore,
\begin{align*}
\int_0^\infty (f^*(r) r^{\frac{s-a}{n}})^*(y) y^{\frac{a}{n}-1}\,dy
&\leq \int_0^\infty f^*(y) y^{\frac{s}{n}-1}\,dy +\frac{C(n)}{a} \sum_{k\in S} \int_{a_k}^{b_k} f^*(y) y^{\frac{s}{n}-1}\,dy\\
&\leq \frac{C(n)}{a} \int_0^\infty f^*(y) y^{\frac{s}{n}-1}\,dy.
\end{align*}
\end{proof}

We are now in a position to prove Theorem~\ref{T:interpolation}. We shall need the notion of a measure preserving transformation. We say that a mapping $h: \R^n \rightarrow (0,\infty)$ is measure preserving if, whenever $E$ is a measurable subset of $(0,\infty)$, the set $h^{-1}E=\{x\in \R^n: h(x)\in E\}$ is a measurable subset of $\R^n$ and the $n$-dimensional Lebesgue measure of $h^{-1}E$ is equal to the one-dimensional Lebesgue measure of $E$. For more details on measure preserving transformations, see, e.g., \cite[Chapter 2, Section 7]{BS}.

\begin{proof}[Proof of Theorem~\ref{T:interpolation}]
We first observe that, by~\eqref{E:assumption_ps}, we have $p_1\neq 2$. In fact, we can assume that $1<p_1<2$ and $1<p\leq 2$, otherwise the result will follow by duality. Further, if $p=2$ then Theorem~\ref{T:interpolation} is a consequence of Plancherel's theorem and of the Sobolev embedding from Lemma~\ref{L:sobolev_embedding}, so it is sufficient to focus on the case $p<2$ in what follows. Define
$$
\theta=\frac{\frac{1}{p}-\frac{1}{2}}{\frac{1}{p_1}-\frac{1}{2}}. 
$$
The assumption~\eqref{E:assumption_ps} yields $\theta \in (0,\frac{s}{s_1})$, and therefore
$$
\theta = \frac{s-s_0}{s_1-s_0}
$$
for some $s_0\in (0,s)$. Fix a function $\si$ satisfying 
\begin{equation}\label{E:assumption_si}
\sup_{j\in \Z} \|(I-\De)^{\frac{s}{2}}[\wh{\Psi} \si(2^j\cdot)]\|_{L^{\frac{n}{s},1}(\R^n)}<\infty,
\end{equation}
and denote $\varphi_j=(I-\De)^{\frac{s}{2}}[\wh{\Psi} \si(2^j\cdot)]$, $j\in \Z$. Thanks to~\eqref{E:assumption_si}, we have $\lim_{r\to \infty} \varphi_j^*(r)=0$. By~\cite[Chapter 2, Corollary 7.6]{BS}, there is a measure preserving transformation $h_j: \R^n \rightarrow (0,\infty)$ such that $|\varphi_j|=\varphi_j^* \circ h_j$. 

For a complex number $z$ with $0\leq \Re(z)\leq 1$, we define
\begin{equation}\label{E:definition}
\si_z(\xi)
=\sum_{j\in \Z} (I - \De)^{-\frac{s_0(1-z)+s_1z}{2}} [\varphi_j h_j^{\frac{s-(1-z)s_0-zs_1}{n}}](2^{-j}\xi) \wh{\Phi}(2^{-j}\xi),
\end{equation}
where $\wh{\Phi}$ is a Schwartz function supported in the set $\{\xi \in \R^n: \frac{1}{4} \leq |\xi| \leq 4\}$ and $\wh{\Phi}\equiv 1$ on the support of $\wh{\Psi}$. 

Fix $f, g\in  \mathscr C_0^\nf$. Given $\ve>0$, let $f_z^\ve$ and $g_z^\ve$ be functions having the form~\eqref{E:form}, with $f$ replaced by $g$ and with $p$ replaced by $p'$ in the latter case, satisfying
  $  \norm{f_\theta^\ve-f}_{L^2(\R^n)}<\ve$, $ \norm{g_\theta^\ve-g}_{L^{2}(\R^n)}<\ve, $ 
  and
  \begin{align}\label{E:fg}
 & \norm{f_{it}^\ve}_{L^{2}(\R^n)}\le   \big( \norm{f}_{L^{p}(\R^n)}+\ve\big)^{\frac {p}{2}} ,\quad
  \norm{f_{1+it}^\ve}_{L^{p_1}(\R^n)}\le   \big( \norm{f}_{L^{p}(\R^n)}+\ve\big)^{\frac {p}{p_1}},\\
	\nonumber
 & \norm{g_{it}^\ve}_{L^{2}(\R^n)}\le   \big( \norm{g}_{L^{p'}(\R^n)}+\ve\big)^{\frac {p'}{2}},\quad
  \norm{g_{1+it}^\ve}_{L^{p_1'}(\R^n)}\le  \big( \norm{g}_{L^{p'}(\R^n)}+\ve\big)^{\frac {p'}{p_1'}}.
  \end{align}
Recall that the existence of these functions is guaranteed by Lemma \ref{L:016}.
For a complex number $z$ with $0\leq \Re(z) \leq 1$, define
  \begin{align*}
  F(z) =& \int_{\R^n} T_{\sigma_z}(f_z^\ve) {g}_z^\ve\; dx
	=\int_{\R^n} \si_z(\xi) \wh{f^{\ve}_{z}}(\xi) \wh{g^{\ve}_{z}}(\xi)\, d\xi.
  \end{align*}
It is straightforward (but rather tedious) to verify 
 that $F$ is analytic on the strip $S=\{z\in \mathcal C: 0 <\Re(z)<1\}$ and continuous on its closure.

Let us write $z=\tau +it$, $0\leq \tau \leq 1$ and $t\in \R$, and denote $s_\tau=s_0(1-\tau)+s_1\tau$. Then, applying Lemmas~\ref{L:sobolev_embedding} and \ref{L:interpolation} and using the fact that $h_j$ is measure preserving, we obtain
\begin{align*}
\|\si_z\|_{L^\infty(\R^n)}
&\leq C(n) \sup_{j\in \Z} \|(I-\De)^{-\frac{s_0(1-z)+s_1z}{2}}[\varphi_j h_j^{\frac{s-(1-z)s_0-zs_1}{n}}]\|_{L^\infty(\R^n)}\\
&\leq C(n) \frac{s_\tau}{n-s_\tau} \sup_{j\in \Z} \|(I-\De)^{-\frac{s_0(-it)+s_1it}{2}}[\varphi_j h_j^{\frac{s-(1-\tau-it)s_0-(\tau+it)s_1}{n}}]\|_{L^{\frac{n}{s_\tau},1}(\R^n)}\\
&\leq C(n,s_0,s_1) \frac{s_\tau}{n-s_\tau} (1+|t|)^{\frac{n}{2}+1} \sup_{j\in \Z} \|\varphi_j h_j^{\frac{s-(1-\tau-it)s_0-(\tau+it)s_1}{n}}\|_{L^{\frac{n}{s_\tau},1}(\R^n)}\\
&\leq C(n,s_0,s_1) (1+|t|)^{\frac{n}{2}+1} \sup_{j\in \Z} \|\varphi_j^*(r) r^{\frac{s-(1-\tau)s_0-\tau s_1}{n}}\|_{L^{\frac{n}{s_\tau},1}(0,\infty)}\\
&\leq C(n,s_0,s_1) (1+|t|)^{\frac{n}{2}+1} \sup_{j\in \Z} \|\varphi_j^*\|_{L^{\frac{n}{s},1}(0,\infty)}\\
&\leq C(n,s_0,s_1) (1+|t|)^{\frac{n}{2}+1} \sup_{j\in \Z} \|\varphi_j\|_{L^{\frac{n}{s},1}(\R^n)}.
\end{align*}
Notice that if $\tau \in [0,\theta)$, then the last but one inequality follows from Lemma~\ref{L:fractional_maximal_function}. 
Therefore,
\begin{align}\label{E:l_infty_estimate}
|F(z)|&\leq \|\si_z\|_{L^\infty(\R^n)} \|f^\ve_z\|_{L^2(\R^n)} \|g^\ve_z\|_{L^2(\R^n)}\\
\nonumber
&\leq C(n,s_0,s_1) (1+|t|)^{\frac{n}{2}+1} \sup_{j\in \Z} \|\varphi_j\|_{L^{\frac{n}{s},1}(\R^n)} \|f^\ve_z\|_{L^2(\R^n)} \|g^\ve_z\|_{L^2(\R^n)}.
\end{align}
Since $\|f^\ve_z\|_{L^2(\R^n)} \|g^\ve_z\|_{L^2(\R^n)}$ can be bounded from above by a constant independent of $z$, the previous estimate yields
\begin{equation}\label{E:assumption_interpolation}
|F(z)|\leq C(n,s_0,s_1,p,p_1,\ve,f,g) (1+|t|)^{\frac{n}{2}+1} \sup_{j\in \Z} \|\varphi_j\|_{L^{\frac{n}{s},1}(\R^n)} \leq \exp\{A e^{a|t|}\}
\end{equation}
for a suitable choice of constants $A>0$ and $a\in (0,\pi)$.
Also, if $z=it$, $t\in \R$, then~\eqref{E:l_infty_estimate} combined with~\eqref{E:fg} yield
\begin{equation}\label{E:endpoint0}
|F(it)|\leq C(n,s_0,s_1) (1+|t|)^{\frac{n}{2}+1} \big( \norm{f}_{L^{p}(\R^n)}+\ve\big)^{\frac p{2}} \big( \norm{g}_{L^{p'}(\R^n)}+\ve\big)^{\frac{p'}{2}} \sup_{j\in \Z} \|\varphi_j\|_{L^{\frac{n}{s},1}(\R^n)}.
\end{equation}
Finally, by the H\"older inequality and by~\eqref{E:assumption},
\begin{align*}
|F(1+it)|&\leq \|T_{\si_{1+it}}(f^\ve_{1+it})\|_{L^{p_1}(\R^n)} \|g^\ve_{1+it}\|_{L^{p'_1}(\R^n)}\\
&\leq C\sup_{j\in \Z} \|(I-\De)^{\frac{s_1}{2}} [\wh{\Psi} \si_{1+it}(2^j\cdot )]\|_{L^{\frac{n}{s_1},1}(\R^n)} \|f^\ve_{1+it}\|_{L^{p_1}(\R^n)} \|g^\ve_{1+it}\|_{L^{p'_1}(\R^n)}.
\end{align*}
Notice that $\wh{\Psi} \si_{1+it}(2^k\cdot )$ picks up only those terms $j$ of~\eqref{E:definition} which differ from $k$ by at most two units. For simplicity, we may therefore take $j=k$ in the calculation below. We have
\begin{align*}
&\|(I-\De)^{\frac{s_1}{2}} [\wh{\Psi}(I - \De)^{-\frac{s_1+it(s_1-s_0)}{2}} [\varphi_j h_j^{\frac{s-s_1+it(s_0-s_1)}{n}}]]\|_{L^{\frac{n}{s_1},1}(\R^n)}\\
&\leq C \|(I-\De)^{\frac{s_1}{2}} [(I - \De)^{-\frac{s_1+it(s_1-s_0)}{2}} [\varphi_j h_j^{\frac{s-s_1+it(s_0-s_1)}{n}}]]\|_{L^{\frac{n}{s_1},1}(\R^n)}\\
&\leq C\|(I - \De)^{-\frac{it(s_1-s_0)}{2}} [\varphi_j h_j^{\frac{s-s_1+it(s_0-s_1)}{n}}]\|_{L^{\frac{n}{s_1},1}(\R^n)}\\
&\leq C (1+|t|)^{\frac{n}{2}+1}\|\varphi_j h_j^{\frac{s-s_1}{n}} \|_{L^{\frac{n}{s_1},1}(\R^n)}
=C (1+|t|)^{\frac{n}{2}+1} \|\varphi_j^*(r) r^{\frac{s-s_1}{n}}\|_{L^{\frac{n}{s_1},1}(0,\infty)}\\
&=C (1+|t|)^{\frac{n}{2}+1} \|\varphi_j^* \|_{L^{\frac{n}{s},1}(0,\infty)}
= C (1+|t|)^{\frac{n}{2}+1} \|\varphi_j \|_{L^{\frac{n}{s},1}(\R^n)}.
\end{align*}
Notice that in the previous estimate we consecutively used Lemmas~\ref{L:interpolation2} and \ref{L:interpolation} and the fact that $h_j$ is measure preserving. 
Therefore,
\begin{equation}\label{E:endpoint1}
|F(1+it)|\leq C (1+|t|)^{\frac{n}{2}+1} \sup_{j\in \Z} \|\varphi_j \|_{L^{\frac{n}{s},1}(\R^n)} (\|f\|_{L^{p}(\R^n)}+\ve)^{\frac{p}{p_1}} (\|g\|_{L^{p'}(\R^n)}+\ve)^{\frac{p'}{p_1'}}.
\end{equation}

A combination of~\eqref{E:assumption_interpolation}, \eqref{E:endpoint0}, \eqref{E:endpoint1} and Lemma~\ref{L:ThreeLines} yields
\begin{equation}\label{E:theta}
|F(\theta)|\leq C\sup_{j\in \Z} \|\varphi_j \|_{L^{\frac{n}{s},1}(\R^n)} (\|f\|_{L^{p}(\R^n)}+\ve) (\|g\|_{L^{p'}(\R^n)}+\ve).
\end{equation}

Observe that for every $\xi\neq 0$,
\begin{align*}
\sigma_\theta(\xi)
&=\sum_{j\in \Z} (I-\De)^{-\frac{s}{2}}[(I-\De)^{\frac{s}{2}}[\sigma(2^j\cdot)\wh{\Psi}]](2^{-j}\xi) \wh{\Phi}(2^{-j}\xi)\\
&=\sum_{j\in \Z} \sigma(\xi) \wh{\Psi}(2^{-j}\xi) \wh{\Phi}(2^{-j}\xi)
=\sum_{j\in \Z} \sigma(\xi) \wh{\Psi}(2^{-j}\xi)
=\sigma(\xi).
\end{align*}
Thus,
$$
  F(\theta) = \int_{\mathbb R^n} \sigma(\xi) \widehat{f_\theta^\ve}(\xi) \wh{ {g}_\theta^\ve}(\xi) \, d\xi \, .
  $$
Notice that
\begin{align*}
 &\bigg| \int_{\mathbb R^n} \sigma(\xi)  \widehat{f_\theta^\ve}(\xi) \wh{ {g}_\theta^\ve}(\xi) \, d\xi
- \int_{\mathbb R^n} \sigma(\xi) \widehat{f }(\xi) \wh{  g }(\xi) \, d\xi \bigg| \\
= &\bigg|  \int_{\mathbb R^n} \sigma(\xi) \Big[ \widehat{f_\theta^\ve}(\xi) \big(\wh{ {g}_\theta^\ve}(\xi)-\wh{  g }(\xi) \big)
 +  \widehat{g }(\xi) \big(\wh{ {f}_\theta^\ve}(\xi)-\wh{  f }(\xi) \big)  \Big]\, d\xi \bigg|  \\
 \le & \|\si\|_{\li(\R^n)}  \Big[ \|f_\theta^\ve\|_{L^2(\R^n)} \|{g}_\theta^\ve -g\|_{L^2(\R^n)} +
 \|g\|_{L^2(\R^n)} \|{f}_\theta^\ve -f\|_{L^2(\R^n)} \Big] \\
  \le & C\sup_{j\in \Z} \|(I-\De)^{\frac{s}{2}}[\wh{\Psi}\si(2^j\cdot)]\|_{L^{\frac{n}{s},1}(\R^n)}  \Big[ \|f^\ve_\theta \|_{L^2(\R^n)} \|{g}_\theta^\ve -g\|_{L^2(\R^n)} +
 \|g \|_{L^2(\R^n)} \|{f}_\theta^\ve -f\|_{L^2(\R^n)} \Big]\, .
\end{align*}
Recall that the functions $f_0^\ve$ and $g_0^\ve$ were chosen in such a way that ${f}_\theta^\ve -f$ and ${g}_\theta^\ve -g$ converge  to zero in $L^2(\R^n)$ as $\ve$ converges to $0$. Therefore, 
letting $\ve\to 0$ in~\eqref{E:theta} yields
$$
\bigg|\int_{\mathbb R^n} \sigma(\xi) \widehat{f }(\xi) \wh{  g }(\xi) \, d\xi \bigg|\le
C\, \sup_{j\in\mathbb Z}
  \big\|{(I-\Delta)^{\frac{s}2}[\sigma(2^j\cdot)\widehat{\Psi}]}\big\|_{L^{\frac{n}{s},1}(\R^n)}
\norm{f}_{L^{p}(\R^n)} \|g\|_{L^{p'}(\R^n)}.
$$
Taking the supremum over all functions $g\in L^{p'}(\R^n)$ with $\| g\|_{L^{p'}(\R^n)} \le 1$ we obtain
  $$
  \norm{T_{\sigma }(f)}_{L^p(\R^n)}\le C\, \sup_{j\in\mathbb Z}
  \big\|{(I-\Delta)^{\frac{s}2}[\sigma(2^j\cdot)\widehat{\Psi}]}\big\|_{L^{\frac{n}{s},1}(\R^n)}
\norm{f}_{L^{p}(\R^n)}.
  $$
The proof is complete.
\end{proof}

 \end{document}